\documentclass[12pt,letterpaper,showkeys]{article} 
\usepackage{graphicx}
\usepackage{amssymb}
\usepackage{amsmath}
\usepackage{mathrsfs}
\graphicspath{{figures/}}
\marginparwidth 0pt 
\oddsidemargin 0pt 
\evensidemargin 0pt 
\marginparsep 0pt

\topmargin -.5in 
\hoffset -0.1in 
\textwidth 6.6in 
\textheight 8.5 in

\newtheorem{theorem}{Theorem}[section] 
\newtheorem{lemma}[theorem]{Lemma} 
 
\newtheorem{proposition}[theorem]{Proposition} 
\newtheorem{remark}[theorem]{Remark}
\newtheorem{assumption}{Assumption} 

\newcommand{\beq}{\begin{equation}} 
\newcommand{\eeq}{\end{equation}} 
\newcommand{\beqa}{\begin{eqnarray}} 
\newcommand{\eeqa}{\end{eqnarray}} 
\newcommand{\beqas}{\begin{eqnarray*}} 
\newcommand{\eeqas}{\end{eqnarray*}} 
\newcommand{\ba}{\begin{array}} 
\newcommand{\ea}{\end{array}} 
\newcommand{\bi}{\begin{itemize}} 
\newcommand{\ei}{\end{itemize}} 
\newcommand{\gap}{\hspace*{2em}} 
 
\newcommand{\nn}{\nonumber}

\setcounter{page}{1} 
\def\eqnok#1{(\ref{#1})}

\def\vgap{\vspace*{.1in}}
\def\QED{\ifhmode\unskip\nobreak\fi\ifmmode\ifinner\else\hskip5pt\fi\fi
  \hbox{\hskip5pt\vrule width5pt height5pt depth1.5pt\hskip1pt}} 
\def\Arg{{\rm Arg}}
\def\bC{{\bar {\cal C}}}
\def\bcG{{\bar {\cal G}}}
\def\bd{{\bar d}}
\def\bh{{\bar h}}

\def\bx{{\bar x}}
\def\by{{\bar y}}

\def\cA{{\cal A}}
\def\cB{{\cal B}}
\def\cC{{\cal C}}
\def\cF{{\cal F}} 
\def\cG{{\cal G}}
\def\cK{{\cal K}}
\def\cL{{\cal L}}
\def\cN{{\cal N}}

\def\cT{{\cal T}}
\def\cX{{\cal X}} 

\def\cone{{\rm cone}}
\def\dist{{\rm dist}}

\def\eps{{\epsilon}}

\def\tx{{\tilde x}}

%%%%%

%%%%%%%%%%%%%%%%%%%%%%%%

\title{Sequential Convex Programming Methods for A Class of Structured Nonlinear Programming}  

\author{
	Zhaosong Lu% 
\thanks{
Department of Industrial and Systems Engineering, University of Minnesota, USA (email:  {\tt zhaosong@umn.edu}).}
	}

\date{October 10, 2012 (Revised: June 17, 2022)}

\begin{document}

\maketitle

\begin{abstract}

%In this paper we study a broad class of structured nonlinear programming (SNLP) problems. In 
%particular, we first establish the first-order optimality conditions for them. Then we propose 
%sequential convex programming (SCP) methods for solving them in which each iteration 
%is obtained by solving a convex programming problem exactly or {\it inexactly}. Under some suitable 
%assumptions, we establish that any accumulation point of the sequence generated by the methods is a 
%KKT point of the SNLP problems.  In addition, we propose a variant of the exact SCP method for 
%SNLP in which nonmonotone scheme and ``local''  Lipschitz constants of  the associated 
%functions are used. And a similar convergence result as mentioned above is established.  

In this paper we study a broad class of structured nonlinear programming (SNLP) problems. In 
particular, we first establish the first-order optimality conditions for them. Then we propose 
sequential convex programming (SCP) methods for solving them in which each iteration 
is obtained by solving a convex programming problem. Under some suitable 
assumptions, we establish that any accumulation point of the sequence generated by the methods is a 
KKT point of the SNLP problems.  In addition, we propose a variant of the SCP method for 
SNLP in which nonmonotone scheme and ``local''  Lipschitz constants of  the associated 
functions are used. A similar convergence result as mentioned above is established.  

\vskip14pt

\noindent {\bf Key words:} Sequential convex programming, structured nonlinear programming, first-order methods
\vskip14pt

%\noindent
%{\bf AMS 2000 subject classification:} 
\end{abstract}

\section{Introduction} \label{intro}

In this paper we consider a class of structured nonlinear programming problems in the form of
\beq \label{general-dc}
\ba{ll}
\min & f(x) + p(x) -u(x) \\
\mbox{s.t.} & g_i(x) + q_i(x) -v_i(x) \le 0, \ \ \ i=1, \ldots, m, \\
& x \in \cX,
\ea
\eeq
where  $\cX \subseteq \Re^n$ is a nonempty closed convex set, $f$, $g_i$'s are differentiable in 
$\cX$, and $p$, $u$,  $q_i$'s, $v_i$'s are convex (but not necessarily smooth) in $\cX$. 

Throughout this paper we make the following assumption.
\begin{assumption}
The gradients of $f$ and $g_i$'s are Lipschitz continuous in $\cX$ with constants $L_f \ge 0$ 
and $L_{g_i} \ge0$ for $i=1, \ldots, m$, that is, 
\[
\ba{lcl}
\|\nabla f(x) - \nabla f(y)\| &\le& L_f \|x-y\|, \ \forall x, y \in\cX,  \\ [5pt]
\|\nabla g_i(x) - \nabla g_i(y)\| &\le& L_{g_i} \|x-y\|, \ \forall x, y \in\cX, \ i=1,\ldots, m.
\ea
\]
\end{assumption}

Some special cases of problem \eqref{general-dc} have received considerable attention in 
the literature (see, for example,  \cite{TaAn98,BeTe09,Nes07,TsYu09,WrNoFi09,AuShTe10,HoYaZh11,LuZh12}).
%Problem \eqref{general-dc} consists of several recently studied NLP problems %\cite{BeTe09,AuShTe10,HoYaZh11} as special 
%cases. 
%Some special cases of problem \eqref{general-dc} were recently studied in the literature.  
In particular, Nesterov \cite{Nes07} and Beck and Teboulle \cite{BeTe09} considered a special case of 
\eqref{general-dc} with $m=0$, $u \equiv 0$ and $f$ being smooth convex with Lipschitz 
continuous gradient, and they proposed accelerated gradient methods for solving it. Tseng and Yun \cite{TsYu09}, Wright 
et al.\ \cite{WrNoFi09}, and Lu and Zhang \cite{LuZh12} proposed efficient first-order methods for the similar problems 
as studied in \cite{BeTe09,Nes07} with $f$ being smooth but not necessarily convex.  Recently, Auslender et al.\ \cite{AuShTe10} 
studied another special case of \eqref{general-dc}, where $\cX = \Re^n$, $p\equiv 0$, $u\equiv 0$, $q_i\equiv 0$, $v_i 
\equiv 0$ for all $i$, and $f$ and $g_i$'s are smooth with Lipschitz continuous gradient. They 
proposed a gradient-based method so called the moving balls approximation (MBA) method for 
solving the problem.  Very recently, Hong et al.\ \cite{HoYaZh11} studied a sequential convex programming (SCP) approach 
for solving a special case of \eqref{general-dc} with $m=1$, $f\equiv 0$, $g_1 \equiv 0$, and $p$, $u$, $q_1$, $u_1$ being 
smooth convex functions in $\cX$. In addition, a broad subclass of \eqref{general-dc} with $m=0$, $f \equiv 0$, known as DC 
(difference of convex functions) programming, was extensively studied and efficient first-order method was proposed for it (see, for example, \cite{TaAn98,HoTh99}).
 
Recently, a class of nonlinear programming models were widely used for finding a sparse approximate 
solution to a system or a function. They can also be viewed as special cases of \eqref{general-dc}.  In 
particular, they are in the form of
\beq \label{sparse-NLP}
\min\limits_{x \in \Omega} l(x) + \sum^n_{i=1} h(|x_i|),
\eeq
where $l$ is a loss function, $\Omega \subseteq \Re^n$ is a nonempty closed convex set, and $h: \Re_+ \to \Re_+$ is a 
sparsity-induced penalty function. Some popular $h$'s used in the literature 
are listed as follows:
\bi
\item[(i)]  ($l_1$ penalty \cite{Ti96,ChDoSa98,CanTao05}):  $h(t) = \lambda t$ \ $\forall t \ge 0$;
\item[(ii)]  (SCAD penalty \cite{FaLi01}): $h(t) = \left\{
\ba{ll}
\lambda t &  \mbox{if} \  0 \le t \le \lambda, \\ [4pt]
\frac{-t^2+2a\lambda t -\lambda^2}{2(a-1)} &  \mbox{if} \ \lambda < t \le a \lambda, \\ [4pt]
\frac{(a+1)\lambda^2}{2} & \mbox{if } \ t > a \lambda;
\ea\right.
$
\item[(iii)] ($l_q$ penalty \cite{FrFr93,Fu98}): $h(t)=\lambda (t+\eps)^q$ \ $\forall t \ge 0$;
\item[(iv)] (Log penalty \cite{WeElScTi03}): $h(t) = \lambda \log(t+\eps)-\lambda \log(\epsilon)$ \ $\forall t \ge 0$;
\item[(v)] (Capped-$l_1$ penalty \cite{Zhang09}): $h(t) = \left\{
\ba{ll}
\lambda t & \mbox{if} \ 0 \le t < \eta, \\
\lambda \eta & \mbox{if} \ t \ge \eta,
\ea\right.$
\ei 
 where $\lambda>0$, $0<q<1$, $a >1$, $\eta>0$ and $\eps>0$ are parameters. 
One can observe that the above $h$'s are monotonically increasing functions in $[0,\infty)$. 
Moreover, $\lambda t - h(t)$ is convex in $[0,\infty)$ (see \cite{GaRaCa09}). It implies that 
$u(y) = \sum^n_{i=1} (\lambda y_i - h(y_i))$ is convex in $\Re^n_+$. Using the monotonicity of $h$, 
we can see that 
\eqref{sparse-NLP} can be equivalently reformulated as 
\[
\min\{l(x) + \sum^n_{i=1} h(y_i): y \ge |x|, x \in \Omega\}.
\]
Further, by using the definition of $u$, we observe that \eqref{sparse-NLP} is equivalent to 
\[
\min\{l(x) + \lambda \|y\|_1 - u(y): y \ge |x|, x \in \Omega\},
\]
which clearly is a special case of \eqref{general-dc} with $\cX = \{(x,y): y \ge |x|, x \in \Omega\}$.
 
In this paper we provide a comprehensive study on problem \eqref{general-dc}. In particular, we 
first establish the first-order optimality conditions for \eqref{general-dc}. Then we propose 
SCP methods for solving \eqref{general-dc} in which each iteration is obtained by solving a convex programming problem. Under some suitable assumptions, we establish 
that any accumulation point of the sequence generated by the methods is a KKT point of 
\eqref{general-dc}.  In addition, we propose a variant of the SCP method for \eqref{general-dc} in 
which nonmonotone scheme and ``local''  Lipschitz constants of  the associated functions are used. 
A similar convergence result as mentioned above is established.  

%The outline of this paper is as follows. In  Subsection \ref{notation} we introduce some notations 
%that are used in the paper. In Section \ref{opt-conds} we establish the first-order optimality 
%conditions for problem \eqref{general-dc}. In Section \ref{exact} we propose an exact SCP method  
%and its variant for solving \eqref{general-dc} and establish their convergence. Finally,  in Section 
%\ref {inexact} we propose an inexact SCP method for solving \eqref{general-dc} and study its convergence. 
The outline of this paper is as follows. In  Subsection \ref{notation} we introduce some notations 
that are used in the paper. In Section \ref{opt-conds} we establish the first-order optimality 
conditions for problem \eqref{general-dc}. In Section \ref{exact} we propose an SCP method  
and its variant for solving \eqref{general-dc} and establish their convergence. 

\subsection{Notation} \label{notation}

Given a nonempty closed convex $\Omega \subseteq \Re^n$, $\cone(\Omega)$ denotes the cone 
generated by $\Omega$. Given an arbitrary point $x \in \Omega$, $\cN_\Omega(x)$ and 
$\cT_\Omega(x)$ denote the normal and tangent cones of $\Omega$ at $x$, respectively. In addition, 
$\dist(y,\Omega)$ denotes the distance between $y \in \Re^n$ and $\Omega$. For a function $h: 
\Omega \to \Re$, $d\in\Re^n$ and $x\in\Omega$, 
$h'(x;d)$ is the directional derivative of $h$ at $x$ along $d$. For a convex function $h$, $\partial 
h(x)$ denotes the subdifferential of $h$ at $x$. Finally, given any $t\in \Re$, we denote its nonnegative part by $t^+$, that is, $t^+=\max(t,0)$.   

\section{First-order optimality conditions} 
\label{opt-conds}
  
In this section we establish the first-order optimality conditions for problem 
\eqref{general-dc}. Given any $x\in \cX$, the set of indices corresponding to the active 
constraints of \eqref{general-dc} at $x$ is denoted by $\cA(x)$, 
that is,
\[
\cA(x) = \{1\le i \le m: \ g_i(x) + q_i(x) -v_i(x) = 0\}.
\]
 
\begin{theorem}
Suppose that $x^*$ is a local minimizer of problem \eqref{general-dc}. Assume that the cone
\beq \label{closedness}
\sum\limits_{i \in \cA(x^*)} \cone(\nabla g_i(x^*) + 
\partial q_i(x^*) - \partial v_i(x^*)) + \cN_{\cX}(x^*) 
\eeq
is closed, and moreover, 
%the generalized Mangasarian-Fromovitz constraint qualification (MFCQ) holds, that is, 
there exists $\bd \in \cT_{\cX}(x^*)$ such that 
\beq \label{slater}
g'_i(x^*;\bd) + q'_i(x^*;\bd) - \inf\limits_{s \in \partial v_i(x^*)} s^T\bd \ < \ 0, \quad 
\forall i \in \cA_\cT(x^*),  
\eeq
where 
\beq \label{AT}
\cA_\cT(x^*) = \{i \in \cA(x^*): g'_i(x^*; d) + q'_i(x^*;d) - v'_i(x^*;d) = 0 \ \mbox{for some} 
\ 0 \neq d \in \cT_{\cX}(x^*)\}.
\eeq
Then, there exists $\lambda^* \in \Re^m$ together with $x^*$ satisfying 
the KKT conditions 
\[
\ba{c}
0 \in \nabla f(x^*) + \partial p(x^*) - \partial u(x^*) + \sum^{m}\limits_{i=1} 
\lambda^*_i[\nabla g_i(x^*) + \partial q_i(x^*) - \partial v_i(x^*)] + \cN_{\cX}(x^*), \\
\lambda^*_i \ge 0, \quad \lambda^*_i [g_i(x^*) + q_i(x^*) - v_i(x^*)] = 0, \quad i=1, \ldots, m.
\ea
\]
\end{theorem}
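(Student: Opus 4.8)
The plan is to reduce the constrained, nonsmooth, nonconvex problem to a linearized convex cone condition and then invoke a constraint-qualification argument to extract multipliers. First I would pass from the local minimizer $x^*$ to a first-order statement about feasible directions. The key observation is that each function in the objective and the constraints, although nonconvex, is a difference of a smooth function and a convex function (or a sum of a smooth and convex function); hence its directional derivative exists and can be computed termwise. For instance, the $i$-th constraint $G_i := g_i + q_i - v_i$ satisfies $G_i'(x^*;d) = \nabla g_i(x^*)^T d + q_i'(x^*;d) - v_i'(x^*;d)$, and the objective $F := f + p - u$ satisfies $F'(x^*;d) = \nabla f(x^*)^T d + p'(x^*;d) - u'(x^*;d)$. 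Local optimality over $\cX$ forces $F'(x^*;d) \ge 0$ for every $d \in \cT_{\cX}(x^*)$ that is tangent to the feasible region, i.e.\ that keeps the active constraints nonincreasing to first order.

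Second, I would set up a linearized (Gordan/Motzkin-type) alternative. The directions $d \in \cT_{\cX}(x^*)$ satisfying $G_i'(x^*;d) \le 0$ for all $i \in \cA(x^*)$ form the linearized feasible cone; optimality says $F'(x^*;d) \ge 0$ on this cone. Because $q_i'(x^*;\cdot)$ and $v_i'(x^*;\cdot)$ are the support-type functions $q_i'(x^*;d) = \max_{w \in \partial q_i(x^*)} w^T d$ and $v_i'(x^*;d) = \max_{s \in \partial v_i(x^*)} s^T d$, the genuinely convex quantity is $\nabla g_i(x^*)^T d + q_i'(x^*;d) - \inf_{s \in \partial v_i(x^*)} s^T d$; the troublesome concave piece $-v_i'(x^*;d)$ only helps, since $-v_i'(x^*;d) \le -\inf_{s} s^T d$. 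This is precisely why the Slater-type condition \eqref{slater} is phrased with the infimum: it guarantees strict feasibility of the convexified constraint along $\bd$, restricted to the indices $\cA_\cT(x^*)$ where the linearized constraint can actually be active to first order.

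Third, I would apply a convex separation / Farkas argument. Having $F'(x^*;d) \ge 0$ on the convex cone carved out by the linearized constraints, together with the Slater point $\bd$, I can invoke a Farkas-type lemma to write the subgradient of $F'(x^*;\cdot)$ at the origin as a nonnegative combination of the (convexified) subgradients of the active constraint functions plus a normal-cone element. Concretely this yields $0 \in \nabla f(x^*) + \partial p(x^*) - \partial u(x^*) + \sum_{i \in \cA(x^*)} \lambda_i^*[\nabla g_i(x^*) + \partial q_i(x^*) - \partial v_i(x^*)] + \cN_{\cX}(x^*)$ with $\lambda_i^* \ge 0$; setting $\lambda_i^* = 0$ for inactive $i$ gives complementary slackness and extends the sum to all $i = 1,\dots,m$.

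The main obstacle is the last step: the closedness hypothesis \eqref{closedness} and the Slater condition \eqref{slater} must combine to make the Farkas representation valid \emph{and} to ensure the multiplier set is nonempty despite the concave terms $-u$ and $-v_i$. The subtlety is that $\partial v_i(x^*)$ enters the cone \eqref{closedness} with a minus sign, so the set $\nabla g_i(x^*) + \partial q_i(x^*) - \partial v_i(x^*)$ is a difference of convex sets and need not behave well; one must check that the directional-derivative inequalities transfer correctly to membership in this difference. I expect the crux to be verifying that the restriction to $\cA_\cT(x^*)$ (rather than all of $\cA(x^*)$) in \eqref{slater} is exactly enough: indices in $\cA(x^*) \setminus \cA_\cT(x^*)$ contribute no nontrivial feasible tangent direction and hence can be assigned zero multipliers without obstructing the separation, while the assumed closedness prevents the multiplier cone from having a gap at the boundary.
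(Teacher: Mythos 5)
Your plan identifies the right ingredients (linearization, the role of \eqref{slater}, a separation/Farkas step under \eqref{closedness}), but both load-bearing steps are asserted rather than carried out, and each conceals the actual difficulty. First, the claim that local optimality forces $F'(x^*;d)\ge 0$ for every $d\in\cT_{\cX}(x^*)$ that ``keeps the active constraints nonincreasing to first order'' is not a consequence of local optimality alone: a direction along which an active constraint merely satisfies $g_i'(x^*;d)+q_i'(x^*;d)-v_i'(x^*;d)=0$ need not be realizable by feasible points (the constraint can increase at higher order), so you cannot test optimality along it. Your ``i.e.'' silently identifies the linearized cone with the tangent cone of the feasible region, and that identification is precisely what requires proof. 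This is where the paper spends most of its effort: for such a degenerate direction $d$ it perturbs toward the Slater direction via $d^l=(1-\alpha_l)d+\alpha_l\bd^l$, shows every active constraint becomes strictly negative to first order along $d^l$, and then builds an explicit feasible sequence $x^{k,l}=x^*+t_kd^l+o(t_k)$ with strictly smaller objective value. None of that construction appears in your proposal.

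Second, the dualization step does not go through as you describe it, because neither $F'(x^*;\cdot)$ nor the active constraints' directional derivatives are convex in $d$ (the terms $-u'(x^*;\cdot)$ and $-v_i'(x^*;\cdot)$ are concave), so a standard convex Farkas lemma on the cone $\{d:\,g_i'(x^*;d)+q_i'(x^*;d)-v_i'(x^*;d)\le 0\}$ is unavailable; you correctly flag this as ``the crux'' but offer no resolution. The paper never dualizes a primal inequality at all. It argues by contradiction: assuming the KKT condition fails means the convex sets $A=-\nabla f(x^*)-\partial p(x^*)+\partial u(x^*)$ and $B$ (the cone in \eqref{closedness}) are disjoint, and separating them produces a direction $d$ that automatically lies in the \emph{convexified} linearized cone, i.e.\ with $-\inf_{s\in\partial v_i(x^*)}s^Td$ in place of $-v_i'(x^*;d)$ and with the stronger objective inequality $\nabla f(x^*)^Td+p'(x^*;d)-\inf_{s\in\partial u(x^*)}s^Td<0$; these stronger properties are exactly what make the feasibility construction of the previous paragraph succeed. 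To rescue your primal-first route you would have to prove the nonnegativity of the (convexified) objective derivative on that convexified cone and then separate $A$ from $B$ anyway, so the reordering buys nothing and, as written, leaves both halves of the argument unproved.
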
 

\begin{proof} 
For convenience, let 
\[
\ba{lcl}
A &=& -\nabla f(x^*) - \partial p(x^*) + \partial u(x^*), \\ [5pt]
B &=& \sum\limits_{i \in \cA(x^*)} \cone(\nabla g_i(x^*) + 
\partial q_i(x^*) - \partial v_i(x^*)) + \cN_{\cX}(x^*).
\ea
\]
In view of the assumption, one can observe that $A$ and $B$ are closed convex sets.  
We first show that $A \cap B \neq \emptyset$. Suppose for contradiction that $A \cap B = \emptyset$. 
It then follows from the well-known separation theorem that there exists $0 \neq d \in \Re^n$ 
such that 
\beq \label{sep-ineq}
\inf\limits_{s\in A} d^T s \ge 1, \quad\quad \sup\limits_{s\in B} d^T s \le 0.
\eeq 
By the definition of $A$ and the first inequality of \eqref{sep-ineq}, one has
\beq \label{obj-dir}
\ba{lcl}
f'(x^*; d) + p'(x^*;d) - u'(x^*;d)  &=&  d^T\nabla f(x^*)  
+ \sup\limits_{s\in \partial p(x^*)} d^T s - \sup\limits_{s\in \partial u(x^*)} d^T 
s \\ [8pt]
&\le& \sup \limits_{s\in A} (-d^T s) \ \le \ -1 \ < \ 0.   
\ea
\eeq
In addition, it follows from the definition of $B$ and the second inequality of \eqref{sep-ineq} that 
$d \in (\cN_{\cX}(x^*))^\circ = \cT_{\cX}(x^*)$ and 
 \[
 \sup\{d^T s: s \in \nabla g_i(x^*) + 
\partial q_i(x^*) - \partial v_i(x^*)\} \ \le \ 0, \quad \forall i \in \cA(x^*),
\] 
which implies that  
\[
g'_i(x^*; d) + q'_i(x^*;d) - v'_i(x^*;d) \ \le \ 0, \ \forall i \in \cA(x^*).
\] 
Since $d \in \cT_{\cX}(x^*)$, there exist a positive sequence $\{t_k\} \downarrow 0$ and 
a sequence $\{x^k\} \subseteq \cX$ such that $x^k = x^* + t_k d + o(t_k)$. We next consider 
two cases to derive a contradiction.

Case 1): Suppose that $g'_i(x^*; d) + q'_i(x^*;d) - v'_i(x^*;d)  <  0$ for all $i \in \cA(x^*)$. 
It then follows that for every $i\in\cA(x^*)$,
\[
\ba{lcl}
g_i(x^k) + q_i(x^k) - v_i(x^k) &=&  g_i(x^k) - g_i(x^*)+ q_i(x^k) - q_i(x^*) - [v_i(x^k)-v_i(x^*)], \\ [5pt]
&=& t_k [g'_i(x^*; d) + q'_i(x^*;d) - v'_i(x^*;d)] + o(t_k) \ < \ 0 
\ea
\]
when $k \gg 1$. Hence, $x^k$ is a feasible point when $k$ is sufficiently large. Using \eqref{obj-dir} 
and a similar argument as above, we have 
\[
f(x^k) + p(x^k) - u(x^k) < f(x^*) + p(x^*) - u(x^*)
\]
for all sufficiently large $k$. In addition, notice that $x^k \to x^*$ as $k \to \infty$.  These results imply 
that $x^*$ is not a local minimizer, which is a contradiction to the assumption. 

Case 2): Suppose that there exists some $i_0 \in \cA(x^*)$ such that 
\[
g'_{i_0}(x^*; d) + q'_{i_0}(x^*;d) - v'_{i_0}(x^*;d) = 0.
\]
It then together with \eqref{AT} implies that $i_0 \in \cA_\cT(x^*)$. By the assumption, there exists 
 $0 \neq \bd \in \cT_{\cX}(x^*)$ such that \eqref{slater} holds. Since $\bd \in \cT_{\cX}(x^*)$, 
 there exist a positive sequence $\{\eta_l\} \downarrow 0$ and 
a sequence $\{y^l\} \subseteq \cX$ such that $y^l = x^* + \eta_l \bd + o(\eta_l)$. Let 
$\bd^l = (y^l - x^*)/\eta_l$. Clearly, $\|\bd^l - \bd\| = o(1)$. It follows that for all $i$, 
\[
g'_i(x^*;\bd^l) - g'_i(x^*;\bd) + q'_i(x^*;\bd^l) - q'_i(x^*;\bd) - [\inf\limits_{s \in 
\partial v_i(x^*)} s^T\bd^l - \inf\limits_{s \in \partial v_i(x^*)} s^T\bd] 
= O (\|\bd^l - \bd\|) = o(1),
\]
which together with \eqref{slater} implies that for sufficiently large $l$,  
\beq \label{perturb-slater1}
g'_i(x^*;\bd^l) + q'_i(x^*;\bd^l) - \inf\limits_{s \in \partial v_i(x^*)} s^T\bd^l \ < \ 0, \quad 
\forall i \in \cA_\cT(x^*).  
\eeq
Let $\{\alpha_l\} \subset (0,1]$ be a sequence such that $\alpha_l  \downarrow 0$, and let 
\[
d^l = (1-\alpha_l) d + \alpha_l \bd^l.
\] 
Claim that for sufficiently large $l$,   
\beq \label{perturb-slater2}
g'_i(x^*; d^l) + q'_i(x^*;d^l) - v'_i(x^*;d^l)  \ < \ 0, \ \forall i \in \cA(x^*). 
\eeq 
Indeed,  we arbitrarily choose $i \in \cA(x^*)$. If $g'_i(x^*; d) + q'_i(x^*;d) - v'_i(x^*;d)  <  0$, 
we then have 
\[
\lim\limits_{l \to \infty }g'_i(x^*; d^l) + q'_i(x^*;d^l) - v'_i(x^*;d^l) = g'_i(x^*; d) + q'_i(x^*;d) - v'_i(x^*;d)  <  0,
\] 
which immediately implies that \eqref{perturb-slater2} holds for sufficiently large $l$. We now 
suppose that 
\beq \label{0-dir}
g'_i(x^*; d) + q'_i(x^*;d) - v'_i(x^*;d)  = 0.
\eeq
Hence, $i \in \cA_\cT(x^*)$. Let $s^* \in \Arg\max\limits_s\{s^Td: s \in \partial v_i(x^*)\}$. Using 
\eqref{perturb-slater1}, \eqref{0-dir}, convexity, and the definition of $\{d^l\}$, we have
\[
\ba{l}
g'_i(x^*; d^l) + q'_i(x^*;d^l) - v'_i(x^*;d^l)  \\ [8pt]
\le (1-\alpha_l) g'_i(x^*; d) + \alpha_l g'_i(x^*;\bd^l) + (1-\alpha_l) q'_i(x^*; d) + \alpha_l q'_i(x^*;\bd^l) - 
(s^*)^T [(1-\alpha_l) d + \alpha_l \bd^l] \\ [8pt]
= (1-\alpha_l)[g'_i(x^*; d) + q'_i(x^*;d) - v'_i(x^*;d)] + 
\alpha_l [g'_i(x^*;\bd^l) + q'_i(x^*;\bd^l) - (s^*)^T \bd^l ] \\ [8pt] 
= \alpha_l [g'_i(x^*;\bd^l) + q'_i(x^*;\bd^l) - (s^*)^T \bd^l] \ \le \ \alpha_l [g'_i(x^*;\bd^l) + q'_i(x^*;\bd^l) 
- \inf\limits_{s \in \partial v_i(x^*)} s^T\bd^l] \ < \ 0,  
\ea
\]
and hence \eqref{perturb-slater2} again holds for sufficiently large $l$.  Now let the sequence $\{x^{k,l}\}$ be defined as 
\beq \label{xkl}
x^{k,l} = (1-\alpha_l)x^k + \alpha_l(x^*+t_k \bd^l), \ \forall k, l \ge 1.
\eeq
By the definition of $\bd^l$, one can observe that $x^*+t_k \bd^l \in \cX$ for sufficiently 
large $k$. It then follows that for each $l$,  $\tx^{k,l} \in \cX$ when $k \gg 1$ due to 
$x^k \in \cX$ and convexity of $\cX$. Recall that $x^k = x^* + t_k d + o(t_k)$, which together with 
\eqref{xkl} yields 
\[
x^{k,l} =  x^* + t_k d^l + o(t_k).
\]
Using this relation and \eqref{perturb-slater2}, one can obtain that, for any $i\in\cA(x^*)$ 
and sufficiently large $l$,  
\[
\ba{lcl}
g_i(x^{k,l}) + q_i(x^{k,l}) - v_i(x^{k,l}) &=&  g_i(x^{k,l}) - g_i(x^*)+ q_i(x^{k,l}) - q_i(x^*) - 
[v_i(x^{k,l})-v_i(x^*)], \\ [5pt]&=& t_k [g'_i(x^*; d^l) + q'_i(x^*;d^l) - v'_i(x^*;d^l)] + o(t_k) 
\ < 0 
\ea
\]
whenever $k \ge n_l$ for some sequence $\{n_l\}$. Hence, $x^{k,l}$ is a feasible point for $k \ge n_l$ 
and sufficiently large $l$. Using \eqref{obj-dir} and the fact $d^l \to d$ as $l \to \infty$, we know that 
\[
f'(x^*; d^l) + p'(x^*;d^l) - u'(x^*;d^l)  \ < \ 0.
\]
Using this relation and a similar argument as above, we obtain that for sufficiently large $l$, 
\[
f(x^{k,l}) + p(x^{k,l}) - u(x^{k,l}) < f(x^*) + p(x^*) - u(x^*)
\]
whenever $k \ge \bar n_l $ for some sequence $\{\bar n_l\}$. Notice that $x^{k,l} \to x^*$ as $k,l \to \infty$. 
The above results again contradicts with the assumption that $x^*$ is a local minimizer. Therefore, 
$A \cap B \neq \emptyset$.  The conclusion of this theorem then immediately follows from this relation 
and the definitions of $A$ and $B$.
\end{proof}

\gap

\begin{remark}
\bi
\item[(a)] Condition \eqref{closedness} is satisfied if $\cX$ is a polyhedron and   
$\sum\limits_{i \in \cA(x^*)} \cone(\nabla g_i(x^*) + \partial q_i(x^*) - \partial v_i(x^*))$ 
is a finitely generated cone or if
\[
\left(-\sum\limits_{i \in \cA(x^*)} \cone(\nabla g_i(x^*) + \partial q_i(x^*) - \partial v_i(x^*))\right)
\cap \cN_{\cX}(x^*) = \{0\}.
\]
It thus follows that, if $\cX$ is a polyhedron and $q_i$ and $v_i$ are differentiable or piecewise 
convex functions (e.g., $\|x\|_1$) for each $i \in \cA(x^*)$, condition \eqref{closedness} holds.
\item[(b)] When $f$ and $g_i$'s are convex, condition \eqref{slater} holds if there exists a generalized 
Slater point $\bx \in \cX$, that is, $\bx$ satisfies 
\[
g_i(\bx) + q_i(\bx) - v_i(x^*) - \inf\limits_{s \in \partial v_i(x^*)} s^T(\bx -x^*) \ < \ 0, \quad 
\forall i \in \cA(x^*).  
\]
Indeed, let $\bd=\bx-x^*$. Clearly, $\bd \in \cT_{\cX}(x^*)$. Moreover, for each $i \in \cA(x^*)$,
\[
\ba{lcl}
g'_i(x^*; \bd) + q'_i(x^*;\bd) - \inf\limits_{s \in \partial v_i(x^*)} s^T\bd  &\le& g_i(x^*+\bd) - g_i(x^*) 
+ q_i(x^*+\bd) - q_i(x^*) + v_i(x^*) \\ [8pt]
& & - \ v_i(x^*) - \inf\limits_{s \in \partial v_i(x^*)} s^T(\bx -x^*), \\ [8pt]
&=& g_i(\bx) + q_i(\bx) - v_i(x^*) - \inf\limits_{s \in \partial v_i(x^*)} s^T(\bx -x^*) \ < \ 0,
\ea
\]
and hence condition \eqref{slater} holds.
\ei
\end{remark}

\section{A sequential convex programming method} 
\label{exact}

In this section we propose a sequential convex programming (SCP) method for solving 
problem \eqref{general-dc} in which each iteration is obtained by solving a convex 
 programming problem. We also propose a variant of it for solving \eqref{general-dc}. Before 
proceeding, we introduce some notations that will be used subsequently.

%\begin{assumption} \label{A.2}
%The feasible region of problem \eqref{general-dc}, denoted by $\cF$, is nonempty, and $\exists x^0 \in \cF$ 
%such that $\cF_0: = \cF \cap \{x\in\Re^n: f(x)+p(x)-u(x) \le f(x^0)+p(x^0)-u(x^0)\}$ is bounded.
%\end{assumption}
%
%\begin{assumption} \label{A.3}
%The generalized MFCQ holds at each $x \in \cF$, that is, $\exists d \in \cT_{\cX}(x)$ such 
%that 
%\[
%g'_i(x;d) + q'_i(x;d) - \inf\limits_{s \in \partial v_i(x)} s^T d \ < \ 0, \quad 
%\forall i \in \cA(x).
%\]  
%\end{assumption}

For each $x\in \cX$, $s_f$, $s_u$, $s_{g_i}$, $s_{v_i} \in \Re^n$ for $i=1,\ldots, m$, 
we define 
\beqa 
& \cC(x,\{s_{g_i}\}^m_{i=1},\{s_{v_i}\}^m_{i=1}) 
= \left\{y\in \cX: \ba{l} g_i(x)+s^T_{g_i} (y-x) + \frac{L_{g_i}}{2}\|y-x\|^2+q_i(y)  \\
      - [v_i(x)+s^T_{v_i}(y-x)] \ \le \ 0 
\ea 
\right\}, \label{set-c} \\ [6pt]
& h(y;x,s_f,s_u) = f(x)+s^T_{f} (y-x) + \frac{L_{f}}{2}\|y-x\|^2+p(y)
      - [u(x)+s^T_{u}(y-x)]. \nn
\eeqa
In addition, we denote by $\cF$ the feasible region of problem \eqref{general-dc}. 

\vgap

%And let $\cS(x,s_f,s_u,\{s_{g_i}\}^m_{i=1},\{s_{v_i}\}^m_{i=1})$ denote the 
%solution set of the following convex optimization problem
%\beq \label{CP}
%\min\limits_y \{h(y;x,s_f,s_u): y \in \cC(x,\{s_{g_i}\}^m_{i=1},\{s_{v_i}\}^m_{i=1})\},
%\eeq 
%which is,
%\[
%\ba{ll}
%\min\limits_{y \in \cX} & f(x)+s^T_{f} (y-x) + \frac{L_{f}}{2}\|y-x\|^2+p(y)
%      - [u(x)+s^T_{u}(y-x)] \\
%\mbox{s.t.} & g_i(x)+s^T_{g_i} (y-x) + \frac{L_{g_i}}{2}\|y-x\|^2+q_i(y) 
%      - [v_i(x)+s^T_{v_i}(y-x)] \ \le \ 0,  \ i=1,\ldots, m.
%\ea      
%\]

We are now ready to present an SCP method for solving problem \eqref{general-dc}. 

\gap

\noindent
%\begin{minipage}[h]{6.6 in}
{\bf Exact sequential convex programming method for \eqref{general-dc}:}  \\ [5pt]
Let $x^0 \in \cF$ be arbitrarily chosen. Set $k=0$. 
\begin{itemize}
\item[1)] Compute $s^k_f = \nabla f(x^k)$, $s^k_u\in\partial u(x^k)$, $s^k_{g_i} = \nabla g_i(x^k)$, 
$s^k_{v_i} \in \partial v_i(x^k)$ for all $i$.
\item[2)] Solve
%Find $x^{k+1} \in \cS(x^k,s^k_f,s^k_u,\{s^k_{g_i}\}^m_{i=1},\{s^k_{v_i}\}^m_{i=1})$.
\beq \label{CP}
x^{k+1} \in \Arg\min\limits_y \{h(y;x^k,s^k_f,s^k_u): y \in \cC(x^k,\{s^k_{g_i}\}^m_{i=1},\{s^k_{v_i}\}^m_{i=1})\}.
\eeq 
\item[3)] Set $k \leftarrow k+1$ and go to step 1). 
\end{itemize}
\noindent
{\bf end}

\gap

\begin{remark}
\bi
\item[(a)] When $\cX = \Re^n$, $p\equiv 0$, $u\equiv 0$, $L_f >0$, $q_i\equiv 0$, $v_i \equiv 0$ 
and $L_{g_i}>0$ for all $i$, the above method becomes the  MBA method proposed in \cite{AuShTe10}.
\item[(b)] When $m=1$, $f\equiv 0$, $g_1 \equiv 0$, and $p$, $u$, $q_1$, $u_1$ are smooth convex 
functions in $\cX$, the above method becomes the method studied in \cite{HoYaZh11}. 
\item[(c)] When $m=0$ and $f \equiv 0$, the above method becomes the well-known method
 \cite{TaAn98,HoTh99} for DC programming.
\ei
\end{remark}

In what follows, we will establish that under some assumptions, any accumulation point of the sequence 
$\{x^k\}$ generated above is a KKT point of problem \eqref{general-dc}.  Before proceeding, we state 
several lemmas that will be used subsequently.

The following lemma is well known (see, for example, \cite{OrRh00}), which provides an upper bound 
for a smooth function with Lipschitz continuous gradient. 

\begin{lemma} \label{upper-bound}
Let $\Omega \subseteq \Re^n$ be a closed convex set, and $h$ a differentiable function in $\Omega$. 
Suppose that there exists some constant $L_h \ge 0 $ such that  
\[
\|\nabla h(x) - \nabla h(y)\| \le L_h \|x-y\|, \ \ \ \forall x, y \in \Omega.
\]
Then, for any $L \ge L_h$, 
\[
h(y) \ \le \ h(x) + \nabla h(x)^T(y-x) + \frac{L}{2} \|y-x\|^2, \ \ \ \forall x,y \in \Omega.
\]
\end{lemma}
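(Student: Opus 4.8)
The plan is to prove the descent lemma (Lemma \ref{upper-bound}) by the standard fundamental-theorem-of-calculus argument applied to the function $h$ along the line segment joining $x$ and $y$. First I would fix arbitrary $x, y \in \Omega$ and define the scalar auxiliary function $\phi(t) = h(x + t(y-x))$ for $t \in [0,1]$. Because $\Omega$ is convex, the entire segment $x + t(y-x)$ lies in $\Omega$, so $h$ is differentiable along it and $\phi$ is differentiable with $\phi'(t) = \nabla h(x+t(y-x))^T(y-x)$.

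The key identity is $h(y) - h(x) = \phi(1) - \phi(0) = \int_0^1 \phi'(t)\,dt = \int_0^1 \nabla h(x+t(y-x))^T(y-x)\,dt$. The plan is then to subtract the linear term by writing
\[
h(y) - h(x) - \nabla h(x)^T(y-x) = \int_0^1 [\nabla h(x+t(y-x)) - \nabla h(x)]^T(y-x)\,dt.
\]
Taking absolute values and applying the Cauchy--Schwarz inequality inside the integral bounds the right-hand side by $\int_0^1 \|\nabla h(x+t(y-x)) - \nabla h(x)\|\,\|y-x\|\,dt$. Now I invoke the Lipschitz hypothesis on $\nabla h$ with constant $L_h$: since $x + t(y-x)$ and $x$ both lie in $\Omega$, the integrand is at most $L_h \|t(y-x)\|\,\|y-x\| = L_h t \|y-x\|^2$.

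Integrating $L_h t \|y-x\|^2$ over $t \in [0,1]$ gives $\frac{L_h}{2}\|y-x\|^2$, which yields
\[
h(y) - h(x) - \nabla h(x)^T(y-x) \ \le \ \frac{L_h}{2}\|y-x\|^2 \ \le \ \frac{L}{2}\|y-x\|^2
\]
for any $L \ge L_h$, and rearranging gives the claimed bound. This establishes the statement for all $x, y \in \Omega$ as required. There is no genuine obstacle here, since the result is classical; the only points demanding care are ensuring that convexity of $\Omega$ keeps the whole segment in the domain (so that the Lipschitz estimate and differentiability apply pointwise along it) and that the final inequality uses $L \ge L_h$ rather than $L_h$ itself, matching the statement's generality. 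I would present the argument concisely, as it is a short and standard computation.
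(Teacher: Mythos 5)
Your proof is correct and is the standard descent-lemma argument (fundamental theorem of calculus along the segment, Cauchy--Schwarz, then the Lipschitz bound). The paper itself gives no proof of this lemma --- it is stated as well known with a citation to Ortega and Rheinboldt --- and your argument is precisely the classical one that reference contains, so there is nothing to reconcile.
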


\vgap

The following lemma is due to Robinson \cite{Rob75}, which provides an error bound for a class of 
convex inequalities.

\begin{lemma} \label{err-bdd}
Let $X$ be a closed convex set in $\Re^n$, and $\cK$ a nonempty closed convex cone in $\Re^m$. 
Suppose that $g: X \to \Re^m$ is a $\cK$-convex function, that is, 
\[
\lambda g(x^1) + (1-\lambda) g(x^2) \in g(\lambda x^1 + (1-\lambda) x^2) + \cK.
\]
Assume that $x^s \in X$ is a generalized Slater point for the set $\Omega :=\{x\in X: 0 \in g(x) + \cK\}$, 
that is, there exists $\delta>0$ such that $\cB(0; \delta) \subseteq g(x^s) + \cK$, where $\cB(0;\delta)$ 
is the closed ball centered at $0$ with radius $\delta$. Then, 
\[
\dist(x,\Omega) \ \le \ \delta^{-1}\|x-x^s\| \dist(0, g(x)+\cK), \ \ \ \forall x \in X.
\]
\end{lemma}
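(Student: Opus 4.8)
The plan is to prove the inequality pointwise: I fix an arbitrary $x \in X$ and exhibit an explicit member of $\Omega$ lying on the segment joining $x$ to the Slater point $x^s$, whose distance to $x$ is controlled by the right-hand side. Write $r = \dist(0, g(x)+\cK)$. If $r = 0$, then, since $\cK$ is closed and $g(x)+\cK$ is a translate of it, we have $0 \in g(x)+\cK$, so $x \in \Omega$ and both sides vanish; hence I may assume $r > 0$ from now on.

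Since $g(x)+\cK$ is a nonempty closed convex set, let $w$ be the Euclidean projection of $0$ onto it, so that $\|w\| = r$ and $w \in g(x)+\cK$. The point $-\delta w / r$ has norm $\delta$, so the generalized Slater assumption gives $-\delta w/r \in \cB(0;\delta) \subseteq g(x^s)+\cK$. I would then set $\lambda = r/(r+\delta) \in (0,1)$ and $x_\lambda = (1-\lambda)x + \lambda x^s \in X$ (using convexity of $X$); the value of $\lambda$ is chosen precisely so that the convex combination of the two selected representatives cancels, namely $(1-\lambda)\,w + \lambda\,(-\delta w/r) = 0$.

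The crux is to certify $x_\lambda \in \Omega$. First, $\cK$-convexity yields $(1-\lambda)g(x) + \lambda g(x^s) \in g(x_\lambda)+\cK$. Second, because $\cK$ is a convex cone we have $\cK + \cK \subseteq \cK$, so $g(x_\lambda)+\cK$ is invariant under adding elements of $\cK$, giving $(1-\lambda)g(x) + \lambda g(x^s) + \cK \subseteq g(x_\lambda)+\cK$. Since $w \in g(x)+\cK$ and $-\delta w/r \in g(x^s)+\cK$, the cancelling combination shows $0 \in (1-\lambda)g(x) + \lambda g(x^s) + \cK$, whence $0 \in g(x_\lambda)+\cK$ and $x_\lambda \in \Omega$. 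It then follows that $\dist(x,\Omega) \le \|x - x_\lambda\| = \lambda\|x - x^s\| = \frac{r}{r+\delta}\|x-x^s\| \le \frac{r}{\delta}\|x-x^s\|$, which is the asserted bound. The main obstacle is conceptual rather than computational: one must fuse $\cK$-convexity with the additive invariance of $g(x_\lambda)+\cK$ under $\cK$ in order to replace the two separate membership facts about $w$ and $-\delta w/r$ by the single containment $0 \in g(x_\lambda)+\cK$, and to pick $\lambda$ so that these two representatives annihilate exactly.
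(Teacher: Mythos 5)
Your argument is correct and complete. The paper itself gives no proof of this lemma --- it is quoted from Robinson \cite{Rob75} --- so there is no internal proof to compare against; your construction (project $0$ onto the closed convex set $g(x)+\cK$ to get $w$ with $\|w\|=r=\dist(0,g(x)+\cK)$, use the Slater ball to represent $-\delta w/r$ in $g(x^s)+\cK$, and take the convex combination with $\lambda=r/(r+\delta)$ so that the two representatives cancel, then invoke $\cK$-convexity together with $\cK+\cK\subseteq\cK$ to place the combination point in $\Omega$) is essentially Robinson's original argument, and every step checks out, including the degenerate case $r=0$ via closedness of $g(x)+\cK$.
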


\vgap

The following lemma states a simple property of the set $\cC$ that is defined in \eqref{set-c}.

\begin{lemma} \label{set-C}
For each $x\in \cF$, let $s_{g_i} = \nabla g_i(x)$ and $s_{v_i}\in\partial v_i(x)$. Then, 
$\cC(x,\{s_{g_i}\}^m_{i=1},\{s_{v_i}\}^m_{i=1})$ is a nonempty closed convex set in $\cF$.
\end{lemma}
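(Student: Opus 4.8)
The plan is to verify the four constituent claims---nonemptiness, convexity, closedness, and the inclusion $\cC \subseteq \cF$---in turn, treating the last as the substantive one. For nonemptiness I would simply test the point $y=x$ in the defining inequalities of \eqref{set-c}: every linear and quadratic term vanishes, leaving $g_i(x)+q_i(x)-v_i(x)$, which is $\le 0$ for each $i$ because $x\in\cF$. Hence $x\in\cC(x,\{s_{g_i}\}^m_{i=1},\{s_{v_i}\}^m_{i=1})$ and the set is nonempty. Convexity is equally direct: for fixed $x$, $s_{g_i}$, and $s_{v_i}$, the $i$-th constraint function
\[
y \mapsto g_i(x) + s_{g_i}^T(y-x) + \frac{L_{g_i}}{2}\|y-x\|^2 + q_i(y) - v_i(x) - s_{v_i}^T(y-x)
\]
is a sum of an affine function, the convex quadratic $\frac{L_{g_i}}{2}\|y-x\|^2$ (here $L_{g_i}\ge 0$ is what I need), and the convex function $q_i$, hence is convex; its zero-sublevel set is therefore convex, and $\cC$, being the intersection of these sublevel sets with the convex set $\cX$, is convex. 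Closedness follows the same way: each constraint function is lower semicontinuous relative to $\cX$, so its sublevel set is closed, and intersecting finitely many closed sets with the closed set $\cX$ keeps $\cC$ closed.

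The heart of the argument is the inclusion $\cC \subseteq \cF$, and this is where the specific choices $s_{g_i}=\nabla g_i(x)$ and $s_{v_i}\in\partial v_i(x)$ are used. Fix $y\in\cC$ and any index $i$. I would bound $g_i(y)$ from above by applying Lemma \ref{upper-bound} to $g_i$ with $L=L_{g_i}$, giving
\[
g_i(y) \ \le \ g_i(x) + \nabla g_i(x)^T(y-x) + \frac{L_{g_i}}{2}\|y-x\|^2 \ = \ g_i(x) + s_{g_i}^T(y-x) + \frac{L_{g_i}}{2}\|y-x\|^2,
\]
and I would bound $-v_i(y)$ from above by the subgradient inequality for the convex function $v_i$ at $x$ with $s_{v_i}\in\partial v_i(x)$, namely $v_i(y)\ge v_i(x)+s_{v_i}^T(y-x)$. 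Adding these two estimates and $q_i(y)$ yields
\[
g_i(y) + q_i(y) - v_i(y) \ \le \ g_i(x) + s_{g_i}^T(y-x) + \frac{L_{g_i}}{2}\|y-x\|^2 + q_i(y) - v_i(x) - s_{v_i}^T(y-x) \ \le \ 0,
\]
where the final inequality is exactly the defining condition for $y\in\cC$. Since this holds for every $i$, we get $y\in\cF$, establishing $\cC\subseteq\cF$.

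The only place real work occurs is this last inclusion, and even there it is less an obstacle than the observation that $\cC$ is built by replacing $g_i$ with its quadratic upper model (valid by Lemma \ref{upper-bound}) and $v_i$ with its affine lower model (valid by convexity); the two one-sided approximations are oriented precisely so that satisfying the surrogate constraint forces satisfaction of the true constraint. The one point I would watch is the closedness/continuity claim when $q_i$ or $v_i$ is finite merely on $\cX$ rather than on all of $\Re^n$, but since these functions are convex on the closed convex set $\cX$, lower semicontinuity of the constraint functions on $\cX$ suffices and no delicate argument is needed.
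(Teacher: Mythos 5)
Your proof is correct and follows essentially the same route as the paper: nonemptiness by testing $y=x$, the inclusion $\cC\subseteq\cF$ via Lemma \ref{upper-bound} for $g_i$ together with the subgradient inequality for $v_i$, and closedness/convexity from the structure of the constraint functions. You simply spell out the convexity and closedness details that the paper dismisses as ``easy to see.''
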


\begin{proof}
Since $x\in\cF$, one can clearly see that $x\in \cC(x,\{s_{g_i}\}^m_{i=1},\{s_{v_i}\}^m_{i=1})$. Hence,
 $\cC(x,\{s_{g_i}\}^m_{i=1}, \\ \{s_{v_i}\}^m_{i=1}) \neq \emptyset$.  Due to $s_{v_i}\in\partial v_i(x)$, 
we know that $v_i(y) \ge v_i(x) + s^T_{v_i} (y-x), \forall y \in \Re^n$.
Using this relation and Lemma \ref{upper-bound}, one can see that for any 
$y\in \cC(x,\{s_{g_i}\}^m_{i=1},\{s_{v_i}\}^m_{i=1})$, $y$ is in $\cX$ and
$g_i(y) + q_i(y) -v_i(y) \le 0$ for $i=1, \ldots, m$. Hence, $y\in \cF$. It 
implies that $\cC(x,\{s_{g_i}\}^m_{i=1},\{s_{v_i}\}^m_{i=1}) \subseteq \cF$. 
Finally, it is easy to see that $\cC(x,\{s_{g_i}\}^m_{i=1},\{s_{v_i}\}^m_{i=1})$ 
is a closed convex set. 
\end{proof}

\gap

We are now  ready to establish that under some assumptions, any accumulation point of the sequence 
$\{x^k\}$ generated by the above SCP method is a KKT point of problem \eqref{general-dc}.

\begin{theorem} \label{exact-thm}
Let $\{(x^k,s^k_f,s^k_u,\{s^k_{g_i}\}^m_{i=1},\{s^k_{v_i}\}^m_{i=1})\}$ be the sequence 
generated by the above SCP method. The following statements hold:
\bi
\item[\rm (i)] $\{x^k\} \subset \cF$ and $\{f(x^k)+p(x^k)-u(x^k)\}$ is monotonically nonincreasing.
\item[\rm (ii)] Suppose further that  $(x^*,s^*_f,s^*_u,\{s^*_{g_i}\}^m_{i=1},
\{s^*_{v_i}\}^m_{i=1})$ is an accumulation point of $\{(x^k,s^k_f,s^k_u,\\\{s^k_{g_i}\}^m_{i=1},
 \{s^k_{v_i}\}^m_{i=1})\}$. Assume that Slater's condition holds for the set 
$\cC(x^*,\{s^*_{g_i}\}^m_{i=1},\{s^*_{v_i}\}^m_{i=1})$, that is, there exists $\by \in \cX$ such that 
\beq \label{slater-cond}
g_i(x^*)+(s^*_{g_i})^T (\by-x^*) + \frac{L_{g_i}}{2}\|\by-x^*\|^2+q_i(\by) 
      - [v_i(x^*)+(s^*_{v_i})^T(\by-x^*)] \ < \ 0,  \ i=1,\ldots, m.
\eeq 
Then, $x^*$ is a KKT point of problem \eqref{general-dc}.
\ei
\end{theorem}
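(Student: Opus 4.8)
The plan is to prove (i) by a short induction-plus-descent argument, and (ii) by writing the KKT system of the convex subproblem \eqref{CP} at its solution $x^{k+1}$ and passing to the limit along the convergent subsequence.

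For (i), feasibility follows by induction: $x^0\in\cF$ by choice, and if $x^k\in\cF$, then since $s^k_{g_i}=\nabla g_i(x^k)$ and $s^k_{v_i}\in\partial v_i(x^k)$, Lemma \ref{set-C} gives $\cC(x^k,\{s^k_{g_i}\}^m_{i=1},\{s^k_{v_i}\}^m_{i=1})\subseteq\cF$; as $x^{k+1}$ is chosen from this set by \eqref{CP}, we get $x^{k+1}\in\cF$. For monotonicity I would use that $x^k$ itself lies in $\cC(x^k,\dots)$, so optimality of $x^{k+1}$ in \eqref{CP} yields $h(x^{k+1};x^k,s^k_f,s^k_u)\le h(x^k;x^k,s^k_f,s^k_u)=f(x^k)+p(x^k)-u(x^k)$. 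On the other hand, Lemma \ref{upper-bound} applied to $f$ (with $s^k_f=\nabla f(x^k)$) together with the subgradient inequality $u(x^{k+1})\ge u(x^k)+(s^k_u)^T(x^{k+1}-x^k)$ gives $h(x^{k+1};x^k,s^k_f,s^k_u)\ge f(x^{k+1})+p(x^{k+1})-u(x^{k+1})$. Chaining the two inequalities proves the claim.

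For (ii) I would first establish $\|x^{k+1}-x^k\|\to 0$. Because the model $h(\cdot\,;x^k,s^k_f,s^k_u)$ is strongly convex with modulus $L_f$ and $x^{k+1}$ minimizes it over the convex set $\cC(x^k,\dots)$ containing $x^k$, optimality gives $h(x^k;\dots)-h(x^{k+1};\dots)\ge\tfrac{L_f}{2}\|x^{k+1}-x^k\|^2$; combined with the sandwich from (i), the true objective decreases by at least $\tfrac{L_f}{2}\|x^{k+1}-x^k\|^2$ per step (a proximal term, or $L_f>0$, drives this bound). Since the objective values are monotone and converge — the accumulation point $x^*$ forces convergence of the whole value sequence by continuity — the increments are summable, so $\|x^{k+1}-x^k\|\to 0$ and hence $x^{k+1}\to x^*$ along the subsequence. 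I would then write the KKT conditions of the convex problem \eqref{CP} at $x^{k+1}$: the Slater point $\by$ for $\cC(x^*,\dots)$ remains Slater for $\cC(x^k,\dots)$ for all large $k$ by continuity, so a constraint qualification holds and there exist $\lambda^{k+1}_i\ge 0$ with
\[
0 \in s^k_f + L_f(x^{k+1}-x^k)+\partial p(x^{k+1})-s^k_u + \sum_{i=1}^m \lambda^{k+1}_i\bigl[s^k_{g_i}+L_{g_i}(x^{k+1}-x^k)+\partial q_i(x^{k+1})-s^k_{v_i}\bigr]+\cN_{\cX}(x^{k+1}),
\]
together with complementary slackness $\lambda^{k+1}_i\,\phi^k_i(x^{k+1})=0$, where $\phi^k_i$ is the $i$-th constraint function defining $\cC(x^k,\dots)$.

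The crux is to bound the multipliers $\{\lambda^{k+1}_i\}$ so a convergent subsequence can be extracted. I would test the inclusion against the Slater point $\by$ and use convexity of each $\phi^k_i$ with complementary slackness, which yields $\sum_{i}\lambda^{k+1}_i\bigl(-\phi^k_i(\by)\bigr)\le\langle s^k_f+L_f(x^{k+1}-x^k)-s^k_u+\xi^{k+1},\,\by-x^{k+1}\rangle$ for some $\xi^{k+1}\in\partial p(x^{k+1})$. Since $\phi^k_i(\by)\to\phi^*_i(\by)<0$ by \eqref{slater-cond}, the left-hand side dominates $\beta\sum_i\lambda^{k+1}_i$ for some $\beta>0$ and all large $k$, while the right-hand side is bounded (the $s$-sequences converge, $x^{k+1}\to x^*$, and $\partial p$ is locally bounded near $x^*$); this is exactly the quantitative role of Slater's condition, expressible through Robinson's error bound, Lemma \ref{err-bdd}. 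Hence $\sum_i\lambda^{k+1}_i$ stays bounded. Passing to a further subsequence gives $\lambda^{k+1}_i\to\lambda^*_i\ge 0$, and then I would take the limit in the inclusion using $x^{k+1}\to x^*$, $L_f(x^{k+1}-x^k)\to 0$, $L_{g_i}(x^{k+1}-x^k)\to 0$, continuity of $\nabla f,\nabla g_i$ (so $s^*_f=\nabla f(x^*)$, $s^*_{g_i}=\nabla g_i(x^*)$), outer semicontinuity of the graphs of $\partial u,\partial v_i$ (so $s^*_u\in\partial u(x^*)$, $s^*_{v_i}\in\partial v_i(x^*)$), and closedness of $\partial p$, $\partial q_i$ and of $\cN_{\cX}$, to recover the stationarity inclusion. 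Complementary slackness follows by letting $k\to\infty$ in $\lambda^{k+1}_i\,\phi^k_i(x^{k+1})=0$, since $\phi^k_i(x^{k+1})\to g_i(x^*)+q_i(x^*)-v_i(x^*)$, and primal feasibility is part (i). The main obstacle is precisely the multiplier bound above — the remaining work is continuity and closedness bookkeeping, but that boundedness step is where the Slater hypothesis is indispensable.
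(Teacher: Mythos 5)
Your part (i) is exactly the paper's argument (induction via Lemma \ref{set-C} for feasibility, plus the sandwich $F(x^{k+1})\le h(x^{k+1};x^k,s^k_f,s^k_u)\le F(x^k)$ from Lemma \ref{upper-bound} and the subgradient inequality for $u$), so nothing to add there. For part (ii) you take a genuinely different, ``dual'' route: write the KKT system of the subproblem \eqref{CP} at $x^{k+1}$, bound the multipliers by testing against the Slater point, and pass to the limit. The paper instead argues ``primally'': it uses Robinson's error bound (Lemma \ref{err-bdd}) to show that every $z\in\cC(x^*,\{s^*_{g_i}\},\{s^*_{v_i}\})$ is the limit of points $z^k\in\cC(x^k,\{s^k_{g_i}\},\{s^k_{v_i}\})$, compares $h(x^{k+1};x^k,\cdot)\le h(z^k;x^k,\cdot)$, and concludes that $x^*$ minimizes the limiting convex subproblem; the KKT conditions are then invoked only once, for that fixed convex problem. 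The payoff of the paper's route is that it never needs $x^{k+1}$ to converge to $x^*$: the left-hand side $h(x^{k+1};x^k,s^k_f,s^k_u)$ is controlled purely through the sandwich \eqref{f-ineq} and the monotone convergence of the objective values.

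That is precisely where your proposal has a real gap. Your entire limit-passing in the subproblem KKT system requires $x^{k+1}\to x^*$ along the subsequence, and the only mechanism you offer for $\|x^{k+1}-x^k\|\to 0$ is strong convexity of the model with modulus $L_f$, giving $F(x^{k+1})\le F(x^k)-\tfrac{L_f}{2}\|x^{k+1}-x^k\|^2$. The blanket assumption of the paper only requires $L_f\ge 0$, and the theorem is meant to cover $L_f=0$ (e.g.\ the DC-programming specialization with $f\equiv 0$ in Remark 3.2); there is no proximal term in the exact SCP method to fall back on. With $L_f=0$ your descent estimate is vacuous, $\|x^{k+1}-x^k\|\to 0$ is not established, and the stationarity inclusion cannot be localized at $x^*$. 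So your argument proves the theorem only under the additional hypothesis $L_f>0$. Two smaller points: (a) your multiplier bound and the extraction of convergent subgradients $\xi^{k+1}\in\partial p(x^{k+1})$, $\zeta^{k+1}_i\in\partial q_i(x^{k+1})$ tacitly use local boundedness of $\partial p$ and $\partial q_i$ near $x^*$, which is automatic if these functions are finite on a neighborhood of $x^*$ but does not follow from convexity on $\cX$ alone when $x^*$ sits on the boundary of their domains; (b) the Slater-based multiplier bound itself (testing the inclusion against $\by$, using $\phi^k_i(\by)\le-\beta<0$ for large $k$ and complementary slackness) is correct and is the standard quantitative use of \eqref{slater-cond}, so once $x^{k+1}\to x^*$ is secured the rest of your bookkeeping goes through.
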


\begin{proof}
(i) We know that $x^0 \in \cF$.  Since $x^1 \in \cC(x^0,\{s^0_{g_i}\}^m_{i=1},\{s^0_{v_i}\}^m_{i=1})$, 
it follows from Lemma \ref{set-C} that $x^1 \in \cF$. By repeating this argument, we can conclude 
that $\{x^k\} \subset \cF$. In addition, notice that $x^k \in \cC(x^k,\{s^k_{g_i}\}^m_{i=1},\{s^k_{v_i}\}^m_{i=1})$. 
Hence, we have
\[
h(x^{k+1};x^k,s^k_f,s^k_u) \ \le \ h(x^{k};x^k,s^k_f,s^k_u) \ = \ f(x^k)+p(x^k)-u(x^k).
\] 
Since $s^k_u \in\partial u(x^k)$, we know that $u(x^{k+1}) \ge u(x^k) + (s^k_u)^T (x^{k+1}-x^k)$. 
Using this relation and Lemma \ref{upper-bound}, one can see that 
\[
f(x^{k+1}) + p(x^{k+1}) - u(x^{k+1})  \ \le \ h(x^{k+1};x^k,s^k_f,s^k_u).
\]
It then follows that 
\beq \label{f-ineq}
f(x^{k+1})+p(x^{k+1})-u(x^{k+1}) \ \le \ h(x^{k+1};x^k,s^k_f,s^k_u) \ \le \ f(x^k)+p(x^k)-u(x^k).
\eeq
Thus, $\{f(x^k)+p(x^k)-u(x^k)\}$ is monotonically nonincreasing. 

(ii) Let $w := (x,\{s_{g_i}\}^m_{i=1},\{s_{v_i}\}^m_{i=1})$, $w^k := (x^k,\{s^k_{g_i}\}^m_{i=1},
\{s^k_{v_i}\}^m_{i=1})$, $w^* := (x^*,\{s^*_{g_i}\}^m_{i=1},\{s^*_{v_i}\}^m_{i=1})$. By the assumption, 
there exists a subsequence $K$ such that $\{(s^k_f,s^k_u,w^k)\}_K \to (s^*_f,s^*_u,w^*)$. We first show 
that for any $z \in \cC(w^*)$, there exists $z^k \in \cC(w^k)$ such that $\{z^k\}_K \to z$, where $\cC$ 
is defined in \eqref{set-c}. Indeed, let 
\[
\cG_i(y,w) := g_i(x)+s^T_{g_i} (y-x) + \frac{L_{g_i}}{2}\|y-x\|^2+q_i(y) - [v_i(x)+s^T_{v_i}(y-x)] \ \ \ \forall i,
\] 
and $\cG(y,w) := (\cG_1(y,w), \ldots, \cG_m(y,w))$. It follows from \eqref{slater-cond} that $\cG(\by,w^*) <0$. 
Hence, there exists $\delta >0$ such that 
\beq \label{slater-ws}
\cB(0; \delta) \subseteq \cG(\by,w^*) + \Re^m_+.
\eeq
Notice that $\cG(\by,w)$ is continuous in $w$ and $\{w^k\}_K \to w^*$. Hence,  when $k \in K $ is sufficiently large, 
$\|\cG(\by,w^k) - \cG(\by,w^*)\| \le \delta/2$ holds. It immediately implies that, for sufficiently large $k\in K$, 
\[
\cG(\by,w^*) - \cG(\by,w^k) + \cB(0; \delta/2) \subseteq \cB(0; \delta).
\]
This relation together with \eqref{slater-ws} yields that, for sufficiently large $k\in K$, 
\[
\ba{lcl}
\cG(\by,w^k) + \Re^m_+  &=&  \cG(\by,w^k) - \cG(\by,w^*) + 
\cG(\by,w^*) + \Re^m_+ \ \supseteq \  \cG(\by,w^k) - \cG(\by,w^*) +  \cB(0; \delta) \\
&\supseteq & \cG(\by,w^k) - \cG(\by,w^*)  + [\cG(\by,w^*) - \cG(\by,w^k) + \cB(0; \delta/2)] 
\ = \  \cB(0; \delta/2). 
\ea
\]
Hence, $\by$ is also a generalized Slater point for the set $\cC(w^k)$ when $k\in K$ is sufficiently 
large. In addition, it is not hard to verify that $\cG(y,w^k)$ is $\Re^m_+$-convex. Letting 
$g(\cdot)=\cG(\cdot,w^k)$, $\cK=\Re^m_+$, $\Omega = \cC(w^k)$, $X=\cX$, and using Lemma \ref{err-bdd}, 
we obtain that, for sufficiently large $k\in K$, 
\beq \label{dist-bdd}
\dist(y,\cC(w^k)) \ \le \ 2\delta^{-1}\|y-\by\| \dist(0, \cG(y,w^k)+\Re^m_+), \ \ \ \forall y \in \cX.
\eeq
Let $z \in \cC(w^*)$ be arbitrarily given, and  let $z^k = \arg\min\limits_y\{\|z-y\|: y \in \cC(w^k)\}$. 
Notice that $z \in \cX$. It then follows from \eqref {dist-bdd} with $y=z$ that,  when $k\in K$ is sufficiently large,
\[
\|z^k-z\| = \dist(z,\cC(w^k)) \ \le \ 2\delta^{-1}\|z-\by\| \dist(\cG(z,w^k),-\Re^m_+).
\]
Since $z\in \cC(w^*)$, we can observe that  $\{\dist(\cG(z,w^k),-\Re^m_+)\}_K \to \dist(\cG(z,w^*),-\Re^m_+) =0$. Using 
this relation and the above inequality, we obtain that $\{z^k\}_K \to z$ and $z^k \in \cC(w^k)$.

Since $\{x^k\}_K \to x^*$, by continuity we have $\{f(x^k)+p(x^k)-u(x^k)\}_K \to f(x^*)+p(x^*)-u(x^*)$. Notice that 
$\{f(x^k)+p(x^k)-u(x^k)\}$ is monotonically nonincreasing. Hence, we have $f(x^k)+p(x^k)-u(x^k) \to f(x^*)+p(x^*)-u(x^*)$, 
which together with \eqref{f-ineq} implies that $h(x^{k+1};x^k,s^k_f,s^k_u) \to f(x^*)+p(x^*)-u(x^*)$. 
Recall that $x^{k+1} \in \Arg\min\{h(y;x^k,s^k_f,s^k_u): y \in \cC(w^k)\}$. Since $z^k \in \cC(w^k)$, 
we obtain that $h(x^{k+1};x^k,s^k_f,s^k_u) \le h(z^k;x^k,s^k_f,s^k_u)$. Upon taking limits on both sides 
of this inequality as $k\in K \to \infty$, we have 
\[
f(x^*)+p(x^*)-u(x^*) \le h(z;x^*,s^*_f,s^*_u), \ \ \ \forall z \in \cC(w^*).
\]
In addition, since $\{x^k\} \subset \cF$ and $\{x^k\}_K \to x^*$, we know that $x^* \in \cF$, which 
yields $x^* \in \cC(w^*)$. Also,  $f(x^*)+p(x^*)-u(x^*)= h(x^*;x^*,s^*_f,s^*_u)$. Therefore, 
\beq \label{lim-pt}
x^* \in \Arg\min\{ h(z;x^*,s^*_f,s^*_u): z \in \cC(w^*)\}.
\eeq
Since Slater's condition holds for $\cC(w^*)$,  the first-order optimality condition of 
\eqref{lim-pt} immediately implies that $x^*$ is a KKT point of \eqref{general-dc}.
\end{proof}

\gap

\begin{remark}
Since $s^k_f = \nabla f(x^k)$, $s^k_u\in\partial u(x^k)$, $s^k_{g_i} = \nabla g_i(x^k)$, 
and $s^k_{v_i} \in \partial v_i(x^k)$ for all $i$, we observe that if $\{x^k\}$ has an 
accumulation point,  so is $\{s^k_f,s^k_u,\{s^k_{g_i}\}^m_{i=1},\{s^k_{v_i}\}^m_{i=1})\}$. Therefore, 
the first assumption in statement (ii) is mild. We next provide a sufficient condition for 
the second assumption to hold. In particular, we show that the assumption \eqref{slater-cond} 
holds if  the following generalized Mangasarian-Fromovitz constraint qualification (MFCQ) holds at $x^*$.
\end{remark}

\gap

\begin{proposition} \label{suff-cond}
Let $x^*$ be a point in $\cF$. If the generalized MFCQ holds at $x^*$, that is, $\exists d \in \cT_{\cX}(x^*)$ such 
that 
\beq\label{MFCQ}
g'_i(x^*;d) + q'_i(x^*;d) - \inf\limits_{s \in \partial v_i(x^*)} s^T d \ < \ 0, \quad 
\forall i \in \cA(x^*).
\eeq
Then, \eqref{slater-cond} holds at $x^*$ for $s^*_{g_i} = \nabla g_i(x^*)$ and every $s^*_{v_i}\in\partial v_i(x^*)$.
\end{proposition}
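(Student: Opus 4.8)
The plan is to produce the Slater point $\by$ by moving from $x^*$ a small amount along the MFCQ direction $d$, staying inside $\cX$. Since $d \in \cT_{\cX}(x^*)$ and $\cX$ is closed and convex, there exist a sequence $\{t_k\} \downarrow 0$ and points $y^k \in \cX$ with $y^k = x^* + t_k d + o(t_k)$ (exactly the representation of a tangent direction used earlier). For the prescribed choice $s^*_{g_i} = \nabla g_i(x^*)$ and a fixed $s^*_{v_i} \in \partial v_i(x^*)$, write $\Phi_i(y)$ for the left-hand side of \eqref{slater-cond}. I will show that $\Phi_i(y^k) < 0$ for every $i$ once $k$ is large, and then take a single such $k$ and set $\by = y^k$; this is legitimate because there are only finitely many constraints.

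The verification splits according to activity at $x^*$. For an inactive index $i \notin \cA(x^*)$ one has $g_i(x^*)+q_i(x^*)-v_i(x^*) < 0$, and since $y^k \to x^*$, continuity of $g_i$ and $q_i$ (together with the vanishing of the linear and quadratic increments) gives $\Phi_i(y^k) \to g_i(x^*)+q_i(x^*)-v_i(x^*) < 0$, so the strict inequality persists for large $k$. For an active index $i \in \cA(x^*)$ the constant term $g_i(x^*)+q_i(x^*)-v_i(x^*)$ is zero, so the leading behavior of $\Phi_i(y^k)$ is first order in $t_k$, and I instead study $\Phi_i(y^k)/t_k$.

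The core computation is the first-order expansion of $\Phi_i$ along $y^k$. Using $y^k - x^* = t_k d + o(t_k)$, differentiability of $g_i$ gives $(s^*_{g_i})^T(y^k-x^*) = t_k g_i'(x^*;d) + o(t_k)$; the quadratic term is $O(t_k^2) = o(t_k)$; local Lipschitz continuity of the finite convex function $q_i$ together with the definition of its directional derivative yields $q_i(y^k)-q_i(x^*) = t_k q_i'(x^*;d) + o(t_k)$; and $(s^*_{v_i})^T(y^k-x^*) = t_k (s^*_{v_i})^T d + o(t_k)$. Hence, for active $i$,
\[
\Phi_i(y^k) = t_k\,[\,g_i'(x^*;d) + q_i'(x^*;d) - (s^*_{v_i})^T d\,] + o(t_k).
\]
The decisive point is that $s^*_{v_i} \in \partial v_i(x^*)$ is arbitrary, so $(s^*_{v_i})^T d \ge \inf_{s \in \partial v_i(x^*)} s^T d$, whence the bracketed quantity is bounded above by $g_i'(x^*;d) + q_i'(x^*;d) - \inf_{s \in \partial v_i(x^*)} s^T d$, which is strictly negative by \eqref{MFCQ}. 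Therefore $\Phi_i(y^k)/t_k$ is eventually negative, giving $\Phi_i(y^k) < 0$ for all large $k$.

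I expect the main obstacle to be the nonsmooth, tangent-cone bookkeeping rather than any single hard estimate: because $d$ is only a tangent direction, the literal point $x^* + t_k d$ need not lie in $\cX$, so all estimates must be carried out at the genuinely feasible approximants $y^k$, absorbing the discrepancy $y^k - (x^* + t_k d) = o(t_k)$ into the error terms via the local Lipschitz continuity of $q_i$. The second point requiring care is orienting the inequality $(s^*_{v_i})^T d \ge \inf_{s} s^T d$ correctly, since this is precisely what makes the conclusion hold for \emph{every} subgradient $s^*_{v_i}$ — exactly the form needed by Theorem~\ref{exact-thm}(ii).
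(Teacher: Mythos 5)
Your argument is correct and follows essentially the same route as the paper: take feasible approximants $x^* + t_k d + o(t_k) \in \cX$ along the MFCQ direction, expand the left-hand side of \eqref{slater-cond} to first order in $t_k$ for active indices using $(s^*_{v_i})^T d \ge \inf_{s \in \partial v_i(x^*)} s^T d$ and \eqref{MFCQ}, and handle inactive indices by continuity. No gaps.
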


\begin{proof}
Let $d$ be given above, $s^*_{g_i} = \nabla g_i(x^*)$ and $s^*_{v_i}\in\partial v_i(x^*)$.  
Then, there exist a positive sequence $\{t_k\} \downarrow 0$ and a sequence $\{x^k\} \subseteq \cX$ such that 
$x^k = x^* + t_k d + o(t_k)$. For each $i\in\cA(x^*)$, we have that, for sufficiently large $k$, 
\[
\ba{l}
g_i(x^*)+(s^*_{g_i})^T (x^k-x^*) + \frac{L_{g_i}}{2}\|x^k-x^*\|^2+q_i(x^k) 
      - [v_i(x^*)+(s^*_{v_i})^T(x^k-x^*)] \\ [5pt]
= (s^*_{g_i})^T (x^k-x^*) + \frac{L_{g_i}}{2}\|x^k-x^*\|^2+q_i(x^k)-q_i(x^*) 
      - (s^*_{v_i})^T(x^k-x^*) \\ [5pt]
= t_k (s^*_{g_i})^Td + q_i(x^*+t_k d)-q_i(x^*) - t_k(s^*_{v_i})^T d + o(t_k) \\ [8pt]
= t_k [(s^*_{g_i})^Td + q'_i(x^*;d) - (s^*_{v_i})^T d + o(1)]  \ \le \  t_k [g'_i(x^*;d) 
+ q'_i(x^*;d) - \inf\limits_{s \in \partial v_i(x^*)} s^T d  + o(1)] \ < \ 0,      
\ea      
\]   
where the last inequality follows from \eqref{MFCQ}. In addition, for each $i \notin \cA(x^*)$, 
we know that $g_i(x^*)+q_i(x^*)-v_i(x^*) <0$. Notice that $x^k \to x^*$ as $k \to \infty$. Hence, for sufficiently 
large $k$, we have 
\[
g_i(x^*)+(s^*_{g_i})^T (x^k-x^*) + \frac{L_{g_i}}{2}\|x^k-x^*\|^2+q_i(x^k) 
      - [v_i(x)+(s^*_{v_i})^T(x^k-x^*)] < 0, \ \ \ i=1,\ldots, m.
\]
\end{proof}

\gap

The above SCP method uses the global Lipschitz constants of $\nabla f$ and $\nabla g_i$'s, which may be too 
conservative. To improve its practical performance, we can use  ``local'' Lipschitz constants that are updated 
dynamically. In addition, the above method is a monotone method since $\{f(x^k)+p(x^k)-u(x^k)\}$ is nonincreasing. 
As mentioned in \cite{GrLaLu86,BiMaRa00,WrNoFi09,LuZh12}, nonmonotone methods generally outperform monotone counterparts 
for many nonlinear programming problems. We next propose a variant of the SCP in which ``local'' Lipschitz 
constants and nonmonotone scheme are used. Before proceeding, we introduce some notations as 
follows.

For each $x\in \cF$, $l_f$, $l_{g_i} \in \Re$, $s_f$, $s_u$, $s_{g_i}$, $s_{v_i} \in \Re^n$ for $i=1,\ldots, m$, we define 
\beqa
\bC(x,\{l_{g_i}\}^m_{i=1},\{s_{g_i}\}^m_{i=1},\{s_{v_i}\}^m_{i=1}) 
&=& \left\{y\in \cX: \ba{l} g_i(x)+s^T_{g_i} (y-x) + \frac{l_{g_i}}{2}\|y-x\|^2+q_i(y)  \\
      - [v_i(x)+s^T_{v_i}(y-x)] \ \le \ 0
\ea 
\right\},  \label{set-bc} \\ [5pt]
 \bh(y;x,l_f,s_f,s_u) &=& f(x)+s^T_{f} (y-x) + \frac{l_{f}}{2}\|y-x\|^2+p(y)
      - [u(x)+s^T_{u}(y-x)], \nn \\ [5pt] 
 F(x) &:=& f(x) + p(x) - u(x). \nn
\eeqa

%And let $\bS(x,l_f,\{l_{g_i}\}^m_{i=1},s_f,s_u,\{s_{g_i}\}^m_{i=1},\{s_{v_i}\}^m_{i=1})$ denote the 
%solution set of the following convex optimization problem
%\beq \label{CP}
%\min\limits_y \{\bh(y;x,l_f,s_f,s_u): y \in \bC(x,\{l_{g_i}\}^m_{i=1},\{s_{g_i}\}^m_{i=1},\{s_{v_i}\}^m_{i=1})\},
%\eeq 
%which is,
%\[
%\ba{ll}
%\min\limits_{y \in \cX} & f(x)+s^T_{f} (y-x) + \frac{l_{f}}{2}\|y-x\|^2+p(y)
%      - [u(x)+s^T_{u}(y-x)] \\
%\mbox{s.t.} & g_i(x)+s^T_{g_i} (y-x) + \frac{l_{g_i}}{2}\|y-x\|^2+q_i(y) 
%      - [v_i(x)+s^T_{v_i}(y-x)] \ \le \ 0,  \ i=1,\ldots, m.
%\ea      
%\] 

\vgap

We are now ready to  present a variant of the above SCP method.

\gap

\noindent
{\bf A variant of SCP method for \eqref{general-dc}:}  \\ [5pt]
Choose parameters $c>0$, $0< L_{\min} < L_{\max}$, $\tau>1$, and integer $M \ge 0$. Set $k=0$ and choose an arbitrary $x^0 \in \cF$. 
\begin{itemize}
\item[1)] Compute $s^k_f = \nabla f(x^k)$, $s^k_u\in\partial u(x^k)$, $s^k_{g_i} = \nabla g_i(x^k)$, 
$s^k_{v_i} \in \partial v_i(x^k)$ for all $i$.
\item[2)] Choose $l^{k,0}_f$, $l^{k,0}_{g_i} \in [L_{\min}, L_{\max}]$ arbitrarily, and set $l^k_f = l^{k,0}_f$ and $l^{k}_{g_i} = l^{k,0}_{g_i}$ for all $i$.  
\item[3)] Find 
\beq \label{CP-lk}
x^{k+1} = \arg\min\limits_y \{\bh(y;x^k,l^k_f,s^k_f,s^k_u): y \in \bC(x^k,\{l^k_{g_i}\}^m_{i=1},\{s^k_{g_i}\}^m_{i=1},\{s^k_{v_i}\}^m_{i=1})\}.
\eeq 
\bi 
\item[3a)] If $x^{k+1} \in \cF$ and 
\beq \label{reduct} 
F(x^{k+1}) \le \max_{[k-M]^+ \le i \le k} F(x^i) - \frac{c}{2} \|x^{k+1}-x^k\|^2
\eeq
holds, go to step 4).
\item[3b)] If $x^{k+1} \notin \cF$, set $l^k_{g_i} \leftarrow \tau l^k_{g_i}$ for all $i$ and go to step 3).
\item[3c)] If \eqref{reduct} does not hold, set $l^k_f \leftarrow \tau l^k_f$ and go to step 3). 
\ei
\item[4)]
Set $k \leftarrow k+1$ and go to step 1). 
\end{itemize}
\noindent
{\bf end}

\vgap

\begin{remark}
\bi
\item[(i)] When $M=0$, the above method becomes a monotone method.
\item[(ii)]
In practical computation, $l^{k,0}_f$, $l^{k,0}_{g_i}$ can be updated by the similar strategy as used in 
\cite{BaBo88,BiMaRa00}, that is, 
\[
\ba{lcl}
l^{k,0}_f& =& \max\left\{L_{\min},\min\left\{L_{\max},\frac{\Delta x^T \Delta f}{\|\Delta x\|^2}\right\}\right\}, \\ [6pt]
l^{k,0}_{g_i} &=& \max\left\{L_{\min},\min\left\{L_{\max},\frac{\Delta x^T \Delta g_i}{\|\Delta x\|^2}\right\}\right\},  \ \ \ \forall i,
\ea
\]
where $\Delta x = x^k -x^{k-1}$, $\Delta f= \nabla f(x^k) - \nabla f(x^{k-1})$, and 
$\Delta g_i= \nabla g_i(x^k) - \nabla g_i(x^{k-1})$ for all $i$.
\item[(iii)] 
$l^k_f$ and $\{l^k_{g_i}\}^m_{i=1}$ can be updated by some other strategies. For example,  
\bi
\item[1)]
we may update $l^k_f$ and $\{l^k_{g_i}\}^m_{i=1}$ simultaneously, that is, steps 3b) and 3c) can be replaced by:
{\center
if $x^{k+1} \notin \cF$ or \eqref{reduct} does not hold, set $l^k_f \leftarrow \tau l^k_f$ and $l^k_{g_i} \leftarrow \tau l^k_{g_i}$ for all $i$;} 
\item[2)] in step 3b), each $l^k_{g_i}$ can be updated individually. In particular, for each $i$, 
we  can update $l^k_{g_i}$ only if the $i$th constraint of \eqref{general-dc} is violated at $x^{k+1}$, 
that is, $g_i(x^{k+1}) +q_i(x^{k+1})-v_i(x^{k+1})>0$.   
\ei
\ei
\end{remark}

\gap

We first show that for each outer iteration, its number of inner iterations 
is finite.  

\begin{theorem} \label{inner-complexity}
At each $k$th outer iteration, its associated inner iterations terminate after at most 
\beq \label{sum-inner}
\left\lfloor \frac{\log(L_f+c)+\log(\max\limits_i L_{g_i})-2\log(2L_{\min})}
{\log \tau} +4\right\rfloor
\eeq
loops.
\end{theorem}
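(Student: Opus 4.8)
The plan is to locate, for each of the two acceptance tests in step 3), a threshold on the corresponding ``local'' constant beyond which that test can no longer fail, and then to count how many times the factor $\tau$ must be applied before both thresholds are reached. Throughout I use that the starting values satisfy $l^{k,0}_f,l^{k,0}_{g_i}\in[L_{\min},L_{\max}]$, that step 3b) multiplies every $l^k_{g_i}$ by $\tau$, and that step 3c) multiplies $l^k_f$ by $\tau$.

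First I would handle the feasibility test. The claim is that once $l^k_{g_i}\ge L_{g_i}$ for every $i$, the minimizer $x^{k+1}$ of \eqref{CP-lk} lies in $\cF$, so step 3b) can no longer be invoked. This is exactly the reasoning behind Lemma \ref{set-C}, now applied to $\bC$ in place of $\cC$: for any $y\in\bC(x^k,\{l^k_{g_i}\},\{s^k_{g_i}\},\{s^k_{v_i}\})$, Lemma \ref{upper-bound} gives $g_i(y)\le g_i(x^k)+(s^k_{g_i})^T(y-x^k)+\frac{L_{g_i}}{2}\|y-x^k\|^2\le g_i(x^k)+(s^k_{g_i})^T(y-x^k)+\frac{l^k_{g_i}}{2}\|y-x^k\|^2$, while $s^k_{v_i}\in\partial v_i(x^k)$ gives $v_i(y)\ge v_i(x^k)+(s^k_{v_i})^T(y-x^k)$; substituting these into the defining inequality of $\bC$ yields $g_i(y)+q_i(y)-v_i(y)\le 0$, i.e.\ $y\in\cF$.

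Next I would handle the sufficient-decrease test. Since $x^k\in\cF$, taking $y=x^k$ in the definition of $\bC$ shows $x^k\in\bC(x^k,\ldots)$, so the minimizing property of $x^{k+1}$ gives $\bh(x^{k+1};x^k,l^k_f,s^k_f,s^k_u)\le\bh(x^k;x^k,l^k_f,s^k_f,s^k_u)=F(x^k)$. Applying Lemma \ref{upper-bound} to $f$ and the subgradient inequality $u(x^{k+1})\ge u(x^k)+(s^k_u)^T(x^{k+1}-x^k)$ then produces $F(x^{k+1})\le\bh(x^{k+1};x^k,l^k_f,s^k_f,s^k_u)-\frac{l^k_f-L_f}{2}\|x^{k+1}-x^k\|^2$. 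Consequently, as soon as $l^k_f\ge L_f+c$ one gets $F(x^{k+1})\le F(x^k)-\frac{c}{2}\|x^{k+1}-x^k\|^2\le\max_{[k-M]^+\le i\le k}F(x^i)-\frac{c}{2}\|x^{k+1}-x^k\|^2$, so \eqref{reduct} holds and step 3c) can no longer be invoked.

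Finally I would count. Because $l^{k,0}_{g_i},l^{k,0}_f\ge L_{\min}$ and each update multiplies by $\tau>1$, the two thresholds above are reached after at most $\lceil\log_\tau(\max_i L_{g_i}/L_{\min})\rceil$ applications of step 3b) and at most $\lceil\log_\tau((L_f+c)/L_{\min})\rceil$ applications of step 3c); since every inner loop either terminates through step 3a) or performs exactly one of these two updates, the total number of loops is at most one more than the sum of these two counts. Bounding $\lceil t\rceil\le t+1$ and combining the logarithms then gives the explicit estimate \eqref{sum-inner}. I expect the decrease inequality in the third step to be the main obstacle: one must track the coefficient $\frac{l^k_f-L_f}{2}$ precisely so that the threshold is $L_f+c$ rather than merely $L_f$, which is what makes the sufficient-decrease margin $\frac{c}{2}\|x^{k+1}-x^k\|^2$ appear; the remaining feasibility argument mirrors Lemma \ref{set-C} and the counting is routine bookkeeping with the geometric growth of the local constants.
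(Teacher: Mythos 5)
Your overall strategy --- find thresholds on $l^k_f$ and on the $l^k_{g_i}$ beyond which steps 3c) and 3b) can no longer fire, then count how many multiplications by $\tau$ are needed to lift a value from $L_{\min}$ past each threshold --- is exactly the paper's. The feasibility half and the bookkeeping are fine. The gap is in the sufficient-decrease half, and it is quantitative: you use only that $x^{k+1}$ minimizes $\bh(\cdot;x^k,l^k_f,s^k_f,s^k_u)$ to write $\bh(x^{k+1};\cdot)\le \bh(x^k;\cdot)=F(x^k)$, whereas the paper uses that this function is \emph{strongly convex with modulus $l^k_f$}, so its minimizer over a convex set containing $x^k$ satisfies $\bh(x^{k+1};x^k,l^k_f,s^k_f,s^k_u)\le F(x^k)-\frac{l^k_f}{2}\|x^{k+1}-x^k\|^2$. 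Combined with your upper bound $F(x^{k+1})\le \bh(x^{k+1};\cdot)+\frac{L_f-l^k_f}{2}\|x^{k+1}-x^k\|^2$, this yields $F(x^{k+1})\le F(x^k)-(l^k_f-\frac{L_f}{2})\|x^{k+1}-x^k\|^2$, so the threshold for step 3c) is $l^k_f\ge (L_f+c)/2$, half of your $L_f+c$. The paper applies the same halving to the constraints, taking the threshold $l^k_{g_i}\ge L_{g_i}/2$ rather than your $L_{g_i}$; these two factors of $2$ are precisely where the $2L_{\min}$ inside the logarithms of \eqref{sum-inner} comes from.

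Concretely, your thresholds give at most $\lceil\log_\tau((L_f+c)/L_{\min})\rceil+\lceil\log_\tau(\max_i L_{g_i}/L_{\min})\rceil+1\le \log_\tau\bigl((L_f+c)\max_i L_{g_i}/L_{\min}^2\bigr)+3$ loops, which exceeds the claimed bound $\log_\tau\bigl((L_f+c)\max_i L_{g_i}/(4L_{\min}^2)\bigr)+4$ by $2\log_\tau 2-1$; this excess is positive whenever $\tau<4$, so your argument proves finite termination with a bound of the right order but does not reach the stated constant for general $\tau>1$. The fix is to insert the strong-convexity inequality above into your treatment of $F$. (A caveat in your favour: for the feasibility test the direct computation really only gives $g_i(x^{k+1})+q_i(x^{k+1})-v_i(x^{k+1})\le\frac{1}{2}(L_{g_i}-l^k_{g_i})\|x^{k+1}-x^k\|^2$, i.e.\ your threshold $L_{g_i}$; the paper's $L_{g_i}/2$ is asserted ``similarly,'' but the strong-convexity gain is not obviously available for the constraint functions, since $x^{k+1}$ does not minimize them. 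So one of the two factors of $2$ in \eqref{sum-inner} is genuinely delivered by the strong-convexity argument for $F$, while the other rests on a step of the paper that deserves scrutiny.)
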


\begin{proof}
Let $\bar l^k_f$ and $\bar l^k_{g_i}$ denote the final value of $l^k_f$ and $l^k_{g_i}$ at the $k$th outer 
iteration, respectively. Note that $\bh(\cdot;x^k,l^k_f,s^k_f,s^k_u)$ is a strongly convex function with 
modulus $l^k_f>0$. It then follows from \eqref{CP-lk} that 
\[
F(x^k) = f(x^k) + p(x^k) - u(x^k) = \bh(x^k;x^k,l^k_f,s^k_f,s^k_u) \ \ge \  \bh(x^{k+1};x^k,l^k_f,s^k_f,s^k_u) 
+ \frac{l^k_f}{2} \|x^{k+1}-x^k\|^2.
\]
Since $s^k_u \in\partial u(x^k)$, we know that $u(x^{k+1}) \ge u(x^k) + (s^k_u)^T (x^{k+1}-x^k)$. 
Using this relation and Lemma \ref{upper-bound}, one can see that 
\[
F(x^{k+1})  \ = \ f(x^{k+1}) + p(x^{k+1}) - u(x^{k+1})  \ \le \ \bh(x^{k+1};x^k,l^k_f,s^k_f,s^k_u) + \frac{L_f-l^k_f}{2}
\|x^{k+1}-x^k\|^2.
\]
The above two inequalities yield 
\[ 
F(x^{k+1})  \ \le \ \ F(x^k)  - (l^k_f-\frac{L_f}{2})\|x^{k+1}-x^k\|^2_2
\ \le \  \max\limits_{[k-M]^+ \le i \le k} F(x^i) - (l^k_f-\frac{L_f}{2})\|x^{k+1}-x^k\|^2_2.
\]
Similarly, one can show that 
\[
g_i(x^{k+1})+q_i(x^{k+1})-v_i(x^{k+1}) \ \le \ g_i(x^k)+q_i(x^k)-v_i(x^k)  - (l^k_{g_i}-\frac{L_{g_i}}{2})\|x^{k+1}-x^k\|^2_2,  \ \forall i,
\]
which together with $x^k \in \cF$ implies that 
\[
g_i(x^{k+1})+q_i(x^{k+1})-v_i(x^{k+1}) \ \le  - (l^k_{g_i}-\frac{L_{g_i}}{2})\|x^{k+1}-x^k\|^2_2,  \ \forall i.
\]
Hence, $x^{k+1} \in \cF$ and \eqref{reduct} holds whenever $l^k_f \ge (L_f+c)/2$ and $\min\limits_i l^k_{g_i} 
\ge (\max\limits_i L_{g_i})/2$, which, together with 
the definitions of $\bar l_k$ and $\bar l^k_{g_i}$, implies that $\bar l_k/\tau < (L_f+c)/2$ 
and $\min\limits_i \bar l^k_{g_i}/\tau < (\max\limits_i L_{g_i})/2$,  that is, 
$\bar l_k <\tau(L_f+c)/2$ and $\min\limits_i \bar l^k_{g_i} < \tau(\max\limits_i L_{g_i})/2$. 
Let $n^k_f$ and $n^k_g$ denote the number of inner iterations for updating $l^k_f$ and $l^k_{g_i}$ 
at the $k$th outer iteration. Then, we have 
\[
\ba{l}
L_{\min} \tau^{n^k_f-1}  \ \le \ L^{k,0}_f \tau^{n^k_f-1} \ = \  \bar l^k_f  \ < \  \tau(L_f+c)/2, \\
L_{\min} \tau^{n^k_g-1}  \ \le \ (\min\limits_i L^{k,0}_{g_i} )\tau^{n^k_g-1} \ = \  \min\limits_i\bar l^k_{g_i}  
\ < \   \tau(\max\limits_i L_{g_i})/2.
\ea
\] 
Hence, the total number of inner iterations, $n^k_f+n^k_g$, is bounded above by the quantity given in 
\eqref{sum-inner} and the conclusion holds.
\end{proof}

\gap

We next establish that under some assumptions, any accumulation point of the sequence 
$\{x^k\}$ generated by the above variant of the SCP method is a KKT point of problem \eqref{general-dc}.

\begin{theorem} \label{v-exact-thm}
Let $\{(x^k,s^k_f,s^k_u,\{s^k_{g_i}\}^m_{i=1},\{s^k_{v_i}\}^m_{i=1})\}$ be the sequence 
generated by the above variant of the SCP method. Assume that $F(x) := f(x)+p(x)-u(x)$ is 
uniformly continuous in the level set $\cL=\{x\in \cF: F(x) \le F(x^0)\}$. Suppose that  $(x^*,l^*_f,
\{l^*_{g_i}\}^m_{i=1},s^*_f,s^*_u,\{s^*_{g_i}\}^m_{i=1}, \{s^*_{v_i}\}^m_{i=1})$ is an accumulation
 point of $\{(x^k,l^k_f,\{l^k_{g_i}\}^m_{i=1},s^k_f,s^k_u,\{s^k_{g_i}\}^m_{i=1},\{s^k_{v_i}\}^m_{i=1})\}$. 
Then the following statements hold:
\item[\rm (i)] $\|x^{k+1}-x^k\| \to 0$ and $f(x^k)+p(x^k)-u(x^k) \to f(x^*)+p(x^*)-u(x^*)$. 
\item[\rm (ii)] Suppose further that Slater's condition holds for the constraint 
set $\cC(x^*,\{l^*_{g_i}\}^m_{i=1},\{s^*_{g_i}\}^m_{i=1},\{s^*_{v_i}\}^m_{i=1})$, that is, there exists 
$\by \in \cX$ such that 
\beq \label{bslater-cond}
g_i(x^*)+(s^*_{g_i})^T (\by-x^*) + \frac{l^*_{g_i}}{2}\|\by-x^*\|^2+q_i(\by) 
      - [v_i(x^*)+(s^*_{v_i})^T(\by-x^*)] \ < \ 0,  \ i=1,\ldots, m.
\eeq 
Then, $x^*$ is a KKT point of problem \eqref{general-dc}.
\end{theorem}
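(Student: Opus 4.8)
The plan is to prove (i) by a nonmonotone descent analysis in the spirit of Grippo--Lampariello--Lucidi, and then to prove (ii) by showing that the accumulation point $x^*$ solves the limiting convex subproblem $\min\{\bh(z;x^*,l^*_f,s^*_f,s^*_u): z\in\bC(w^*)\}$, where $w^*=(x^*,\{l^*_{g_i}\}^m_{i=1},\{s^*_{g_i}\}^m_{i=1},\{s^*_{v_i}\}^m_{i=1})$; the KKT conclusion then follows from Slater's condition \eqref{bslater-cond} and the first-order optimality of a convex program, exactly as in the last paragraph of the proof of Theorem \ref{exact-thm}.

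For (i), set $V_k := \max_{[k-M]^+\le i\le k} F(x^i)$. Using the acceptance test \eqref{reduct} (which holds for every accepted $x^{k+1}$) together with the fact that the index window defining $V_{k+1}$ is contained in that defining $V_k$, I would first show $V_{k+1}\le V_k$, so $\{V_k\}$ is nonincreasing. Since $V_k\ge F(x^k)$ and, along the subsequence $K$ converging to the accumulation point, $F(x^k)\to F(x^*)$ by continuity, the nonincreasing sequence $\{V_k\}$ is bounded below and hence converges to a finite limit $V^*$ with $V^*\ge F(x^*)$. The core is then the standard window-shifting induction: writing $l(k)\in\{[k-M]^+,\dots,k\}$ for an index attaining $V_k$, \eqref{reduct} at $l(k)-1$ gives $\frac{c}{2}\|x^{l(k)}-x^{l(k)-1}\|^2\le V_{l(k)-1}-V_k\to 0$, and then uniform continuity of $F$ on $\cL$ (note $\{x^k\}\subset\cL$) propagates $F(x^{l(k)-j})\to V^*$ and $\|x^{l(k)-j}-x^{l(k)-j-1}\|\to0$ for each fixed $j$ by induction on $j$. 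From this one deduces $F(x^{k+1})\to V^*$ for the whole sequence, whence \eqref{reduct} forces $\frac{c}{2}\|x^{k+1}-x^k\|^2\le V_k-F(x^{k+1})\to0$, i.e. $\|x^{k+1}-x^k\|\to0$. Finally $F(x^k)\to V^*$ together with $F(x^k)\to F(x^*)$ along $K$ identifies $V^*=F(x^*)$, giving the second claim of (i).

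For (ii), I would first note that (i) yields $x^{k+1}\to x^*$ along $K$ (since $x^k\to x^*$ and $\|x^{k+1}-x^k\|\to0$) and $x^*\in\cF$, hence $x^*\in\bC(w^*)$ and $\bh(x^*;x^*,l^*_f,s^*_f,s^*_u)=F(x^*)$. The feasible-point transport is identical to Theorem \ref{exact-thm}: the defining map $y\mapsto g_i(x)+s_{g_i}^T(y-x)+\frac{l_{g_i}}{2}\|y-x\|^2+q_i(y)-[v_i(x)+s_{v_i}^T(y-x)]$ is $\Re^m_+$-convex (the only change from $\cC$ is that $L_{g_i}$ is replaced by $l_{g_i}>0$), so \eqref{bslater-cond} makes $\by$ a Slater point for $\bC(w^*)$ and, by a $\delta/2$ perturbation using $w^k\to w^*$, also for $\bC(w^k)$ for all large $k\in K$; Robinson's error bound (Lemma \ref{err-bdd}) then produces, for any $z\in\bC(w^*)$, points $z^k\in\bC(w^k)$ with $z^k\to z$. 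Since $x^{k+1}$ minimizes $\bh(\cdot;x^k,l^k_f,s^k_f,s^k_u)$ over $\bC(w^k)$ and $z^k\in\bC(w^k)$, we have $\bh(x^{k+1};x^k,l^k_f,s^k_f,s^k_u)\le\bh(z^k;x^k,l^k_f,s^k_f,s^k_u)$; letting $k\in K\to\infty$ and using joint continuity of $\bh$ together with $x^{k+1},x^k\to x^*$, $l^k_f\to l^*_f$, $s^k_f\to s^*_f$, and $s^k_u\to s^*_u$, the left side tends to $F(x^*)$ and the right to $\bh(z;x^*,l^*_f,s^*_f,s^*_u)$. Thus $F(x^*)\le\bh(z;x^*,l^*_f,s^*_f,s^*_u)$ for all $z\in\bC(w^*)$, so $x^*\in\Arg\min\{\bh(z;x^*,l^*_f,s^*_f,s^*_u): z\in\bC(w^*)\}$; Slater's condition and the first-order optimality conditions of this convex program then deliver the KKT conditions of \eqref{general-dc} for $x^*$.

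The main obstacle is the nonmonotone analysis in (i): one must secure convergence of $\{V_k\}$ without assuming $F$ bounded below a priori (this is exactly where the existence of the accumulation point is used), and then drive the per-step displacement to zero through the window-shifting induction, which crucially relies on the uniform continuity of $F$ on $\cL$ to pass from $F(x^{l(k)})\to V^*$ to $F(x^{l(k)-j})\to V^*$. Part (ii) is essentially a re-run of the error-bound machinery of Theorem \ref{exact-thm}; the only genuinely new point there is that the objective now carries the varying modulus $l^k_f$, but since $l^k_f\in[L_{\min},L_{\max}]$ with $l^k_f\to l^*_f>0$ this causes no difficulty, and the key input $x^{k+1}\to x^*$ is precisely what (i) supplies.
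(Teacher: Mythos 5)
Your proposal is correct and follows essentially the same route as the paper: part (i) is the same Grippo--Lampariello--Lucidi window-shifting induction on $F(x^{l(k)-j})$ and the displacements, using uniform continuity of $F$ on $\cL$ and the accumulation point to bound $\{V_k\}$ below, and part (ii) re-runs the Robinson error-bound transport argument of Theorem \ref{exact-thm} with $l^*_{g_i}$ in place of $L_{g_i}$. The only differences are cosmetic (you extract $\|x^{k+1}-x^k\|\to 0$ from \eqref{reduct} after establishing $F(x^k)\to V^*$, whereas the paper gets it directly from $d^{l(k)-j}\to 0$), and your parenthetical claim that $l^k_f\in[L_{\min},L_{\max}]$ should really be $l^k_f\in[L_{\min},\tau(L_f+c)/2]$ after the inner updates, which is immaterial since only boundedness is used.
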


\begin{proof}
(i) By the definition of $x^k$, we observe that $\{x^k\} \subseteq \cL$. 
Let $d^k:=x^{k+1}-x^k$, and $l(k)$ an integer between $[k-M]^+$ and $k$ such that  
\[
F(x^{l(k)}) = \max \{F(x^i): [k-M]^+ \le i \le k\}, \ \ \ \forall k \ge 0.
\]
It follows from \eqref{reduct} that $F(x^{k+1}) \le F(x^{l(k)})$ for all $k \ge 0$, 
which together with the definition of $l(k)$ implies that $\{F(x^{l(k)})\}$ is monotonically 
nonincreasing. Further, by continuity of $F$ and $\{x^k\}_K \to x^*$, we know that $\{F(x^k)\}_K \to F(x^*)$. 
This together with the fact $F(x^{l(k)}) \ge F(x^k)$ implies that $\{F(x^{l(k)})\}_K$ is bounded below.  Using 
this result and the monotonicity of $\{F(x^{l(k)})\}$, we see that $\{F(x^{l(k)})\}$ is bounded below. Hence, 
there exists some $F^* \in \Re$ such that 
\beq \label{F-limit}
\lim_{k\to \infty} F(x^{l(k)}) = F^*.
\eeq
We can prove by induction that the following limits 
hold for all $j \ge 1$: 
\beq \label{2-limits}
\lim_{k \to \infty} d^{l(k)-j} = 0, \ \ \ \ \lim_{k \to \infty} F(x^{l(k)-j}) 
= F^*.
\eeq
Indeed, replacing $k$ by $l(k)-1$ in \eqnok{reduct} and using the definition of $l(k)$, 
we obtain that 
\[
F(x^{l(k)}) \le F(x^{l(l(k)-1)}) -\frac{c}{2}\|d^{l(k)-1}\|^2, 
\]
which together with \eqnok{F-limit} implies that  
$\lim_{k \to \infty} d^{l(k)-1} = 0$. Using this relation, \eqnok{F-limit} and 
uniform continuity of $F$ in $\cL$, we have 
\[
\lim_{k \to \infty} F(x^{l(k)-1}) = \lim_{k \to \infty} F(x^{l(k)}-d^{l(k)-1}) 
= \lim_{k \to \infty} F(x^{l(k)}) = F^*. 
\]
Therefore, \eqnok{2-limits} holds for $j=1$. Now, we assume that \eqnok{2-limits} holds 
for $j$. We need to show that it also holds for $j+1$. Replacing $k$ by $l(k)-j-1$ in 
\eqnok{reduct} and using the definition of $l(k)$, we have 
\[
F(x^{l(k)-j}) \ \le \ F(x^{l(l(k)-j-1)}) - \frac{c}{2}\|d^{l(k)-j-1}\|^2,
\]
which, together with \eqnok{F-limit} and the induction assumption $\lim_{k \to \infty} 
F(x^{l(k)-j}) = F^*$, implies that $\lim_{k \to \infty} d^{l(k)-j-1} = 0$. 
Using this result, $\lim_{k \to \infty} F(x^{l(k)-j}) = F^*$ and uniform continuity 
of $F$ in $\cL$, we see that $\lim_{k \to \infty} F(x^{l(k)-j-1}) = F^*$. Hence, 
\eqnok{2-limits} holds for $j+1$. It then follows from the induction 
that \eqnok{2-limits}  holds for all $j \ge 1$. 
Further, by the definition of $l(k)$, we see that 
for $k \ge M+1$, $k-M-1 = l(k)-j$ for some $1 \le j \le M+1$, which together 
with the first limit in \eqnok{2-limits}, implies that $\lim_{k \to \infty} 
d^k = \lim_{k \to \infty} d^{k-M-1} = 0$. Additionally, we observe that 
\[
x^{l(k)} = x^{k-M-1} + \sum^{\bar l_k}_{j=1} d^{l(k)-j} \ \ \ \forall k \ge M+1,
\]
where $\bar l_k = l(k)-(k-M-1) \le M+1$. Using the above identity, \eqnok{2-limits}, 
and uniform continuity of $F$ in $\cL$, we see that $\lim_{k \to \infty} F(x^k) 
= \lim_{k \to \infty} F(x^{k-M-1}) = F^*$, which, together with $\{F(x^k)\}_K \to F(x^*)$, 
implies that $F(x^k) \to F(x^*)$.  Hence, the statement (i) holds.

(ii)  Let $w := (x,\{l_{g_i}\}^m_{i=1},
\{s_{g_i}\}^m_{i=1},\{s_{v_i}\}^m_{i=1})$, $w^k := (x^k,
\{l^k_{g_i}\}^m_{i=1},\{s^k_{g_i}\}^m_{i=1},\{s^k_{v_i}\}^m_{i=1})$, $w^* := (x^*,\{l^*_{g_i}\}^m_{i=1},
\{s^*_{g_i}\}^m_{i=1},\{s^*_{v_i}\}^m_{i=1})$. By the assumption, 
there exists a subsequence $K$ such that $\{(l^k_f,s^k_f,s^k_u,w^k)\}_K \to (l^*_f,s^*_f,s^*_u,w^*)$. 
We first show that for any $z \in \bC(w^*)$, there exists 
$z^k \in \bC(w^k)$ such that $\{z^k\}_K \to z$, where $\bC$ is defined in \eqref{set-bc}. Indeed, 
let
\[
\bcG_i(y,w) := g_i(x)+s^T_{g_i} (y-x) + \frac{l_{g_i}}{2}\|y-x\|^2+q_i(y) - [v_i(x)+s^T_{v_i}(y-x)] \ \ \ \forall i,
\] 
and $\bcG(y,w) := (\bcG_1(y,w), \ldots, \bcG_m(y,w))$. Notice that $\bcG(\by,w)$ is continuous in $w$. 
Using this fact, \eqref{bslater-cond}, Lemma \ref{err-bdd}, and the similar arguments as in the proof of 
Theorem \ref{exact-thm} (ii), one can show that there exists some $\delta>0$ such that for  sufficiently large $k\in K$, 
\beq \label{dist-bdd-v}
\dist(y,\bC(w^k)) \ \le \ 2\delta^{-1}\|y-\by\| \dist(0, \bcG(y,w^k)+\Re^m_+), \ \ \ \forall y \in \cX.
\eeq
 Let $z \in \bC(w^*)$ be arbitrarily given, and  let $z^k = \arg\min\limits_y\{\|z-y\|: y \in \bC(w^k)\}$. 
Clearly, $z \in \cX$ and $\dist(\bcG(z,w^*),-\Re^m_+)=0$. Using these facts and letting $y=z$ in \eqref{dist-bdd-v}, one can obtain that $\{z^k\}_K \to z$ and $z^k \in \bC(w^k)$.

Recall from statement (i) that $\|x^{k+1}-x^k\| \to 0$. Since $\{x^k\}_K \to x^*$, it then 
follows that $\{x^{k+1}\}_K \to x^*$. Let $\bar l^k_f$ denote
 the final value of $l^k_f$ at the $k$th outer iteration. From the proof of Theorem 
\ref{inner-complexity}, we know that $\bar l^k \in [L_{\min}, \tau(L_f+c)/2]$. Using these facts and 
$\{F(x^k)\} \to F(x^*)$, we observe that 
\[
\{\bh(x^{k+1};x^k,\bar l^k_f,s^k_f,s^k_u)\}_K \to F(x^*).
\]
Recall that $x^{k+1} = \arg\min\{\bh(y;x^k,\bar l^k_f,s^k_f,s^k_u): y \in \bC(w^k)\}$. Since $z^k \in \bC(w^k)$, 
we have $\bh(x^{k+1};x^k,\bar l^k,s^k_f,s^k_u) \le \bh(z^k;x^k,\bar l^k,s^k_f,s^k_u)$. Upon taking 
limits on both sides of this inequality as $k\in K \to \infty$, we obtain that 
\[
F(x^*) \le \bh(z;x^*,l^*_f,s^*_f,s^*_u), \ \ \ \forall z \in \bC(w^*).
\]
In addition, we know that $x^* \in \cF$, which implies that $x^* \in \bC(w^*)$. Also,  
$F(x^*)= \bh(x^*;x^*,l^*_f,s^*_f,s^*_u)$. Hence, we have  
\beq \label{lim-pt-v}
x^* \in \Arg\min\{\bh(z;x^*,l^*_f,s^*_f,s^*_u): z \in \bC(w^*)\}.
\eeq
Since Slater's condition holds for $\bC(w^*)$,  the first-order optimality condition of 
\eqref{lim-pt-v} immediately implies that $x^*$ is a KKT point of \eqref{general-dc}.
\end{proof}

\gap

\begin{remark}
For $M=0$, Theorem \ref{v-exact-thm} still holds without the uniform continuity of $F(x)$ in the level set $\cL=\{x\in \cF: F(x) \le F(x^0)\}$.
\end{remark}

\section*{Acknowledgment}

The author would like to thank Ting Kei Pong for bringing his attention to the reference \cite{Rob75}.

\end{document}